\documentclass[12pt,a4paper]{amsart}
\linespread{1.3}
\usepackage{hyperref}
\usepackage[numbers,sort & compress]{natbib}
\usepackage{graphics,layout,indentfirst,amsmath,amsthm,amssymb}
\parskip2pt
\oddsidemargin -10 pt \evensidemargin 10 pt \marginparwidth 1 in
\oddsidemargin 0 in \evensidemargin 0 in \marginparwidth 75 pt
\textwidth 6.375 true in

\newtheorem{theorem}{Theorem}[section]
\newtheorem{remark}[theorem]{Remark}
\newtheorem{corollary}[theorem]{Corollary}
\theoremstyle{theorem}

\newtheorem{lemma}[theorem]{Lemma}
\theoremstyle{definition} \theoremstyle{definition}
\newtheorem{defn}[theorem]{Definition}

\numberwithin{equation}{section} \makeatletter
\@namedef{subjclassname@2010}{2010 Mathematics Subject
Classification} \makeatother
\begin{document}
\title[Coefficient estimates of certain subclasses of analytic functions associated 
...]{Coefficient estimates of certain subclasses of analytic functions associated with Hohlov
operator}

 \author[P. Gochhayat]{P. Gochhayat}
\author[A. Prajapati]{A. Prajapati}
  \address{Department of Mathematics \\
   Sambalpur University\\
   Jyoti Vihar 768019\\
   Burla, Sambalpur, Odisha\\ India}
   \email{pgochhayat@gmail.com}
   \email{anujaprajapati49@gmail.com}
    \author[A. K. Sahoo ]{A. K. Sahoo}
    \address{Department of Mathematics \\
    	Veer Surendra Sai University of Technology \\
    	Sidhi Vihar 768018\\
    	Burla, Sambalpur, Odisha\\ India}
    \email{ashokuumt@gmail.com}
   \date{}
\thanks{The present investigation of the second author is supported
under the   INSPIRE fellowship, Department of Science and
Technology, New Delhi, Government of India, Sanction Letter No.
REL1/2016/2/2015-16.} \subjclass[2010]{Primary: 30C45; Secondary:
30C50}

\begin{abstract} A typical quandary in geometric functions theory is to study a functional composed
of amalgamations of the coefficients of the pristine function.
Conventionally, there is a parameter over which the extremal value
of the functional is needed.  The present paper deals with
consequential functional of this type. By making use of linear
operator due to Hohlov \cite{6}, a new subclass
$\mathcal{R}_{a,b}^{c}$ of analytic functions defined in the open
unit disk is introduced. For both real and complex parameter, the
sharp bounds for the Fekete-Szeg\"{o} problems are found. An attempt
has also been taken to found the sharp upper bound to the second and third 
Hankel determinant  for functions belonging to this class. All the
extremal functions are express in term of Gauss hypergeometric
function and convolution. Finally, the sufficient condition for
functions to be in $\mathcal{R}_{a,b}^{c}$ is derived.
 Relevant connections of the new results with well known ones are pointed out.
\end{abstract}

\keywords{Univalent function, Hohlov operator, Coefficient
estimates, Fekete-Szeg\"{o} problem, Hankel determinant, sufficient
condition.}

\subjclass[2010]{Primary: 30C45; Secondary: 30C50}
\maketitle{}
\section{Introduction and Preliminaries}\label{sec1}
Let $\mathcal{A}$  be  the class  of  functions  analytic  in the
\textit{open} unit disk
\begin{gather*}
\mathcal {U}:=\{z:z\in\mathbb{C}~\text{and}~|z|< 1\},
\end{gather*}
normalized by the condition $\mathit{f}(0)=0, ~~\mathit{f}^{\prime}(0)=1$ and has the Taylor-Maclaurin  series of the form:
\begin{equation}\label{p1}
\mathit{f}(z)=z+\sum_{k=2}^{\infty}a_{k}z^{k},~~~~~~~~~\qquad(z \in \mathcal{U}).
\end{equation}
Let $\mathcal S$ be the subclass of $\mathcal A$ consisting of
univalent functions. Suppose that $f$ and $g$ are in $\mathcal A$.
We say that $f$ is \textit{subordinate} to $g$, (or $g$ is
\textit{superordinate} to $f$), write as $$f \prec g ~\text{
in}~\mathcal U~\text{ or}~ f(z) \prec g(z) \qquad\qquad (z \in
\mathcal U),$$ if there exists a function $\omega\in \mathcal A$,
satisfying the conditions of the Schwarz lemma
$(~\text{i.e.}~\omega(0)=0$ and $|\omega(z)|< 1)$ such that
$$f(z)=g(\omega(z))\qquad\qquad(z \in \mathcal U).$$ It follows that
\begin{gather*}
f(z) \prec g(z)\;(z \in \mathcal U) \Longrightarrow f(0)=g(0) \quad
\text{and} \quad f(\mathcal U) \subset g(\mathcal U).
\end{gather*}
In particular, if $g$ is \textit{univalent} in $\mathcal U$, then
the reverse implication also holds (cf.\cite{milmo1}). Denote
$\mathcal{P}$, the class of functions $\phi$ which is analytic  in
$\mathcal {U}$ and is of the form
\begin{equation}\label{p2}
    \phi(z)=1+p_{1}z+p_{2}z^{2}+\cdots\qquad (z \in \mathcal{U}),
\end{equation}
with $\phi(0)=1$ and $\Re(\phi(z))>0$.

If $f$ and $g$ are functions in $\mathcal{A}$ and given by the
power series
\begin{equation*}
f(z) = z+\sum_{n=2}^\infty a_nz^n ~~\text{and}~~ g(z) =z+
\sum_{n=2}^\infty b_nz^n ~~~~~~~~\qquad(z\in \mathcal{U}),
\end{equation*}
then the Hadamard product (or \textit{Convolution}) of $f$ and $g$
denoted by $f * g$, is defined by
\begin{equation*}
(f * g)(z) = z+\sum_{n=2}^\infty a_nb_nz^n = (g * f)(z) ~~~~~~~~~\qquad(z\in
\mathcal{U}).
\end{equation*}
Note that $f*g\in \mathcal{A}$.

For the complex parameters $a$, $b$ and $c$ with $ c \neq
0,-1,-2,-3,\cdots,$ the Gauss hypergeometric function denoted by $ _{2}F_{1}(a,b,c;z)$ and is defined by
\begin{eqnarray*}_{2}F_{1}(a,b;c;z)=\sum_{n=0}^{\infty}\frac{(a)_{n}(b)_{n}}{(c)_{n}}\frac{z^{n}}{n!}\qquad(z \in \mathcal
U),
    \end{eqnarray*}
    where $(\alpha)_{n}$ denotes the Pochhammer symbol (or shifted factorial) given in terms of the Gamma function $\Gamma,$ by
\begin{eqnarray*}\label{p4}
    (\alpha)_{n}={\Gamma(\alpha+n)\over \alpha}=\begin{cases}1;
        &\text{if}~~n=0,\\\alpha(\alpha+1)(\alpha+2)\cdots(\alpha+n-1);&\text{if}~~n\neq0.\end{cases}\end{eqnarray*}

In terms of Gauss hypergeometric function and convolution, Hohlov
(cf.\cite{6}, \cite{7}) introduced and studied a linear operator
denoted by $\mathcal{I}_{a,b}^{c}$ and defined by
$\mathcal{I}^{c}_{a,b}f:\mathcal{A}\rightarrow \mathcal{A},$ as
\begin{eqnarray}\label{p3}
    \mathcal{I}^{c}_{a,b}f(z)&:=&z{} _{2}F_{1}(a,b;c;z)*f(z) \qquad(z \in
    \mathcal{U}).
\end{eqnarray}

We note that upon suitable choice of the parameters, the above
defined three-parameter family of  operator unifies various other
linear operators which are introduced and studied earlier. For
example
\begin{enumerate}
    \item $\mathcal{I}_{a,1}^{c}:=\mathcal{L}(a,c)$, the well known Carlson-Shaffer  operator (cf. \cite{2}).
    \item $\mathcal{I}_{\lambda+1,1}^{1}:=\mathcal{D}^{\lambda}~(\lambda >-1)$, is the  Ruscheweyh derivative operator of order $\lambda$ (cf. \cite{23}).
    \item $\mathcal{I}_{1,1+\eta}^{2+\eta}:=\mathcal{I}_{B}^{\eta}$, the well known Bernardi  integral  operator (cf. \cite{1}, also see \cite{owasri}).
    \item $\mathcal{I}_{2,1}^{2-\alpha}:=\Omega^{\alpha}_z$, the fractional differential operator (cf. \cite{22}), also renamed as Owa-Srivastava fractional differential operator (cf. \cite{16, 17,18}).
    \item $\mathcal{I}_{2,1}^{n+1}:=\mathcal{I}_{n}$, the Noor integral operator (cf. \cite{21}, also see \cite{19}).
    \item $\mathcal{I}_{\mu, 1}^{\lambda+1} := \mathcal{I}_{\lambda, \mu},$ the well known Choi-Saigo-Srivastava operator (cf. \cite{3}).
    \item $\mathcal{I}_{1, 2}^{3} := \mathcal{L},$ is the Libera
    integral operator (see \cite{owasri}).
\item $\mathcal{I}_{2, 1}^{1} := \mathcal{I}_A,$ is the Alexander transformation, where as $\mathcal{I}_{1, 1}^{2}=\int_{0}^z {f(t)\over t} dt$ is its inverse transform (see \cite{4}).
\end{enumerate}
From $(\ref{p3})$ it is clear  that
\begin{equation}\label{p5}
    z(\mathcal{I}_{a,b}^{c}f(z))^{\prime}=a(\mathcal{I}_{a+1,b}^{c}f(z))-(a-1)\mathcal{I}_{a,b}^{c}f(z).
\end{equation}
By using Hohlov operator, we now defined a new subclass of $\mathcal A$ as follows:
\begin{defn}
    A function $f \in \mathcal{A}$ is said to be in the class $\mathcal{R}_{a,b}^{c},$ if and only if  $\left(\frac{\mathcal{I}_{a,b}^{c}f}{z}\right)(z)$ takes all values to the bounded region by the right half plane of the lemniscate of Bernoulli given by,
    \begin{equation*}\label{p8}
    \{w \in \mathbb{C}:\mid  w^{2}-1 \mid<1\}=\{u+iv:(u^{2}+v^{2})^{2}=2(u^{2}-v^{2})\}.
    \end{equation*}
In terms  of subordination, we have  $f \in \mathcal{R}_{a,b}^{c}$ if it satisfies
    \begin{equation*}\label{p7}
    \frac{(\mathcal{I}_{a,b}^{c}f)(z)}{z}\prec \sqrt{(1+z)},\qquad (z \in \mathcal{U}).
    \end{equation*}
\end{defn}
\begin{remark}\label{pg1}
    Taking $b=1$, the class $\mathcal{R}_{a,1}^{c} :=\mathcal{R}(a,c)$, recently introduced and studied by Patel and Sahoo
    \cite{22a}.
    \end{remark}
\begin{remark}\label{pg2}
    Taking $a=2,~b=1$ and $c=1$, we say a function $f$ given by
    (\ref{p1}) is in the class $\mathcal{R}_{2,1}^{1}$ if it
    satisfies the subordination relation
    $$f'(z)\prec \sqrt{(1+z)}, \qquad (z\in\mathcal U).$$
    The family $\mathcal{R}_{2,1}^{1}$ is recently studied by Sahoo and Patel \cite{s1} which is close-to-convex and hence univalent.
    \end{remark}
    \begin{remark}\label{pg3}
    Taking $a=1,~b=1$ and $c=1$, we say a function $f$ given by
    (\ref{p1}) is in the class $\mathcal{R}_{1,1}^{1}$ if it
    satisfies the subordination relation
    $${f(z)\over z}\prec \sqrt{(1+z)}, \qquad (z\in\mathcal U).$$
    \end{remark}
Note that, the family 
$\mathcal{R}_{1,1}^{1} $ 
 contain univalent as well as non
 univalent functions (cf. \cite{4}).

  It is well known
that the $n^{th}$ coefficient of function belonging to the class
$\mathcal S$ is bounded by $n$ and the bounds for the coefficients
gives information about the geometric properties of the functions.
For example, the $n^{th}$ coefficient gives information about the
area  where as the second coefficient of functions in the family
$ S$ yields the growth  and distortion properties of
function. A typical problem in geometric function theory is to study
a functional made up of combinations of the coefficients of the
original function. Usually, there is a parameter over which the
extremal value of the functional is needed. Some of our results
deals with one important functional of this type: the
Fekete-Szeg\"{o} functional. The classical problem settled by
Fekete-Szeg\"{o} \cite{5} is to find for each $\lambda\in[0,1]$ the
maximum value of the coefficient functional is defined by
$\Phi_\lambda (f):=|a_3-\lambda a_2^2|$ over the class $\mathcal S$
and was proved by using Loewner method. Several researchers solved the Fekete-Szeg\"{o}
problem for various subclasses of the class of $\mathcal S$ and
related subclasses of functions in $\mathcal A$. For instant see
\cite{1a}, \cite{dziok}, \cite{kanas,9,10,11,11a}, 
\cite{17}, \cite{19a}, etc. For a systematic survey on Fekete-Szeg\"{o}
problem of   classical subclasses
of $\mathcal S$ we refer \cite{24}.  In \cite{24}, Srivastava et al.
held that the inequality was sharp, however recently Peng (cf. \cite{24a}) showed that the extremal function given there for the case of $\mu \in (2/3,1]$ is not sharp.  
Cho et al. \cite{cho} obtained
Fekete-Szeg\"{o} inequalities for close-to-convex function with respect to a certain convex function which improve the bound studied in \cite{24}.

Another way to investigate the sharp bound for the non linear
functional is by using Hankel or Toeplitz determinant. Recalling
  the $q^{th}$ Hankel determinant of $f$ for
$q\geq 1$ and $n\geq 1$  which is introduced and studied by Noonan
and Thomas \cite{20} as
\begin{equation}\label{p9}
    H_{q}(n)=
    \begin{vmatrix}
        a_{n}&a_{n+1}& \cdots & a_{n+q-1}\\a_{n+1}& a_{n+2}&\cdots & a_{n+q}\\ \vdots & \vdots & \vdots & \vdots\\a_{n+q-1} & a_{n+q} & \cdots & a_{n+2q-2}
    \end{vmatrix}
    \qquad (q,n \in \mathbb{N} ).
\end{equation}
This determinant has been studied by several authors including Noor
\cite{88} with the subject of inquiry ranging from the rate of
growth of $H_{q}(n)$ ~~(as $n \rightarrow \infty$) to the
determinant of precise bounds with specific values of $n$ and $q$
for certain subclasses of analytic functions in the unit disk
$\mathcal{U}.$ For $q=2,~~n=1,~~a_{1}=1,$ then the Hankel
determinant simplifies to $H_{2}(1)=\mid  a_{3}-a_{2}^{2}\mid.$ For
$n=q=2,$ then the Hankel determinant  simplifies to $H_{2}(2)=\mid
a_{2}a_{4}-a_{3}^{2} \mid$. The Hankel determinant $H_{2}(1)$ was
considered by Fekete and Szeg\"{o} \cite{4} and refer to $H_{2}(2)$
as the second Hankel determinant. It is known (cf. \cite{5}) that if $f$ is univalent  in $\mathcal{U}$ then the sharp upper inequality
$H_{2}(1)=\mid a_{3}-a_{2}^{2}\mid \leq 1$ holds. In \cite{77}, Janteng et al. obtained sharp bounds for the
functional $H_{2}(2)$ for the function $f$ in the subclass $RT$ of $\mathcal S$, consisting of functions whose derivative has a
positive real part introduced by Mac Gregor \cite{mac}. They shown
that for every $f\in RT$, $H_{2}(2)=\mid a_{2}a_{4}-a_{3}^{2}
\mid\leq 4/9$. They also found the sharp second Hankel determinant for
the classical subclass of $\mathcal S$, namely, the  class of starlike and
convex functions respectively denoted by $\mathcal S^*$ and
$\mathcal K$ (cf. \cite{777}). The bounds obtained for these two
classes are $\mid a_{2}a_{4}-a_{3}^{2} \mid\leq 1$ and $\mid
a_{2}a_{4}-a_{3}^{2} \mid\leq 1/8$ respectively.  Recently, Ready
and Krishna \cite{ram} obtained the Hankel determinants for starlike
and convex functions with respect to symmetric points. Lee et al.
\cite{lee} obtained the second Hankel determinant for functions
belonging to subclasses of Ma-Minda starlike and convex functions.
Using Owa-Srivastava \cite{22} operator, Mishra and Gochhayat
\cite{16} have obtained the sharp bound to the non-linear functional
$|a_{2}a_{4}-a_{3}^{2}|$ for the subclass of analytic functions
denoted by $R_{\lambda}(\alpha,p)~~(0\leq p \leq 1, 0\leq \lambda <
1, |\alpha|<\frac{\pi}{2}),$ defined as $\Re
\left(e^{i\alpha}\frac{\Omega_{z}^{\lambda}f(z)}{z}\right) > p \cos
\alpha$. Similar coefficient bounds are obtained for
various other subclasses of analytic functions which is defined by using suitable linear operators (see \cite{1aa}, \cite{12},
\cite{kund}, \cite{19b}, \cite{YAVUZ},  \cite{YAVUZ1}, etc).
 We also consider the Hankel determinant in the case of $q=3$ and $n=1,$ denoted by $H_{3}(1),$ given by
 $H_{3}(1)=a_{3}(a_{2}a_{4}-a_{3}^{2})-a_{4}(a_{4}-a_{2}a_{3}+a_{5}(a_{3}-a_{2}^{2}).$
Clearly,
\begin{eqnarray}\label{anuja}
|H_{3}(1)| \leq |a_{3}| |a_{2}a_{4}-a_{3}^{2}|+|a_{4}| |a_{2}a_{3}-a_{4}|+|a_{5}||a_{3}-a_{2}^{2}|.
\end{eqnarray}
 In \cite{2b}, Babalola showed that all the  functional on right hand side of (\ref{anuja}) is sharp for function belongs to the class $\mathcal{RT},$ $\mathcal{S}^{*}$ and $\mathcal{K}.$ Recently Bansal et. al. \cite{2bb} and Raza and Malik \cite{r1} obtained the bound $|H_{3}(1)|$ for certain subclasses of analytic univalent functions.

In our present  investigation, following the techniques adopted by
Libera and Zlotkiewicz (cf.\cite{13}, \cite{14}), for functions
belongs to the family $\mathcal R_{a,b}^c$, the Fekete-Szeg\"{o}
problem is completely solved for both real and complex parameter.
All the extremal functions are presented in terms of Gauss
Hypergeometric functions and convolution. Secondly, using the
techniques of Hankel determinant, the sharp upper bound for the non
linear functional $|a_2a_4-a_3^2|$ is derived.
Motivated by the work of Babalola \cite{2b}  we found the sharp upper bound  to the  $|H_{3}(1)|$ for the function belonging to the  class $\mathcal{R}_{a,b}^{c}$ related with lemniscate of Bernoulli.
 Sufficient condition
for functions to be in $\mathcal{R}_{a,b}^{c}$ is also presented.

To establish our main results, we need the following lemmas:
\begin{lemma}\label{k1}(cf. \cite{4}, \cite{13,14,15})
    Let the function $\phi \in \mathcal{P},$ given by $(\ref{p2})$.  Then
    \begin{equation}\label{p10}
        |p_{k}| \leq 2 \qquad(k \geq 1),
    \end{equation}
    \begin{equation}\label{p11}
        | p_{2}-\nu p_{1}^{2}| \leq 2~~ max \{1, \mid 2\nu-1 \mid\},\qquad (\nu \in \mathbb{C}),
    \end{equation}
     \begin{equation}\label{p13}
        p_{2}=\frac{1}{2}\big\{p_{1}^{2}+(4-p_{1}^{2})x\big\},
    \end{equation}
    \begin{eqnarray}\label{p14}
        p_{3}=\frac{1}{4} \left\{p_{1}^{3}+2(4-p_{1}^{2})p_{1}x-(4-p_{1}^{2})p_{1}x^{2}+2(4-p_{1}^{2})(1-\mid x \mid ^{2})z\right\},
    \end{eqnarray}
    for some complex numbers $x,$ $z$ satisfying $|x| \leq 1$ and $|z| \leq1.$
    The estimates in $(\ref{p10})$ and $(\ref{p11})$ are sharp for the functions given by
    \begin{equation*}\label{p12}
        f(z)=\frac{1+z}{1-z} ,\qquad g(z)=\frac{1+z^{2}}{1-z^{2}}\qquad (z\in \mathcal{U}).
    \end{equation*}
\end{lemma}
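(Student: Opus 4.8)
The plan is to reduce the whole statement to the coefficient structure of an auxiliary Schwarz function and then read off the relations by elementary power-series algebra. Given $\phi \in \mathcal{P}$ with expansion \eqref{p2}, I would introduce
\[
w(z) = \frac{\phi(z)-1}{\phi(z)+1} = \sum_{n=1}^{\infty} b_n z^n,
\]
which is analytic in $\mathcal{U}$, vanishes at the origin, and satisfies $|w(z)| < 1$ because $\Re \phi > 0$; equivalently $\phi(z) = (1+w(z))/(1-w(z))$. Expanding the geometric series $\phi = 1 + 2w + 2w^2 + 2w^3 + \cdots$ and comparing with \eqref{p2} gives the bridge relations $p_1 = 2b_1$, $p_2 = 2(b_2 + b_1^2)$ and $p_3 = 2(b_3 + 2b_1 b_2 + b_1^3)$.

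For the bound \eqref{p10} I would use the Herglotz--Carath\'eodory representation $\phi(z) = \int_{|\zeta|=1} \frac{1+\zeta z}{1-\zeta z}\, d\mu(\zeta)$, valid for a probability measure $\mu$ on the unit circle; reading off coefficients yields $p_k = 2\int \zeta^k\, d\mu(\zeta)$, whence $|p_k| \le 2\int d\mu = 2$. The representations \eqref{p13} and \eqref{p14} are the heart of the matter, and for these I would exploit the Schur parametrization of $w$. Writing $w(z) = z\,\psi(z)$ with $|\psi| \le 1$ and $\psi(0) = b_1$, the Schwarz--Pick inequality at the origin gives $|b_1| \le 1$ and $b_2 = (1-|b_1|^2)x$ for some $|x| \le 1$; applying the same estimate to the once-reduced Schur function $\psi_1(z) = z^{-1}\bigl(\psi(z)-b_1\bigr)\bigl(1-\overline{b_1}\psi(z)\bigr)^{-1}$ produces
\[
b_3 = (1-|b_1|^2)\bigl[(1-|x|^2)z - \overline{b_1}\,x^2\bigr], \qquad |z| \le 1.
\]
Since $\mathcal{P}$ is invariant under $\phi(z) \mapsto \phi(e^{-i\theta}z)$, which rotates $p_k \mapsto e^{-ik\theta}p_k$, I may assume $p_1 = 2b_1 \in [0,2]$ --- this is the normalization under which \eqref{p13}--\eqref{p14} are stated, and it is exactly what makes $4-p_1^2$ rather than $4-|p_1|^2$ appear --- so that $b_1$ is real and $1-|b_1|^2 = (4-p_1^2)/4$. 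Substituting into $p_2 = 2(b_2 + b_1^2)$ immediately reproduces \eqref{p13}, and substituting into $p_3 = 2(b_3 + 2b_1 b_2 + b_1^3)$, then clearing the factor $1/4$, collects the $x$, $x^2$ and $(1-|x|^2)z$ contributions into exactly \eqref{p14}.

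With \eqref{p13} available, inequality \eqref{p11} follows at once: I would write $p_2 - \nu p_1^2 = (\tfrac12 - \nu)p_1^2 + \tfrac12(4-p_1^2)x$, take moduli using $|x| \le 1$, and bound the result by $\tfrac12|2\nu-1|\,t + \tfrac12(4-t)$ with $t = p_1^2 \in [0,4]$; this is affine in $t$, so its maximum over $[0,4]$ equals $2$ when $|2\nu-1| \le 1$ and $2|2\nu-1|$ when $|2\nu-1| \ge 1$, i.e. $2\max\{1,|2\nu-1|\}$. Sharpness of \eqref{p10} and \eqref{p11} is then certified by the two functions displayed in the statement: $(1+z)/(1-z)$ has all $p_k = 2$ (extremal when $|2\nu-1|\ge 1$), while $(1+z^2)/(1-z^2)$ has $p_1 = 0$, $p_2 = 2$ (extremal when $|2\nu-1|\le 1$).

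The step I expect to be the main obstacle is the careful derivation of \eqref{p14}. One must use the correct Schur expression for the third Schwarz coefficient $b_3$, including the $\overline{b_1}\,x^2$ term coming from the reduction, and then track how the real normalization $b_1 = p_1/2$ feeds through the combination $2(b_3 + 2b_1 b_2 + b_1^3)$ so that the cross terms assemble into the precise closed form with coefficients $2$, $-1$ and $2$. Every other ingredient is either the classical Carath\'eodory bound or routine power-series bookkeeping, but pinning down these exact constants --- rather than merely up to sign or an overall factor --- is where the computation must be done with care.
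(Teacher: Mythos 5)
Your proposal is correct, but note first that the paper offers no proof of this lemma at all: it is quoted verbatim from the classical sources it cites (Duren \cite{4} for (\ref{p10}), Libera--Zlotkiewicz \cite{13,14} for (\ref{p13})--(\ref{p14}), Ma--Minda \cite{15} for (\ref{p11})), so the comparison is with those proofs rather than with anything in the text. Against that benchmark your route is genuinely different in its central step: Libera and Zlotkiewicz obtain (\ref{p13}) and (\ref{p14}) from the Carath\'eodory--Toeplitz conditions, i.e.\ the nonnegativity of the Toeplitz determinants built from $p_1,p_2,p_3$, which they solve to parametrize $p_2$ and $p_3$; you instead pass to the Schwarz function $w=(\phi-1)/(\phi+1)$ and run two steps of the Schur algorithm, and your computations check out: the bridge relations $p_1=2b_1$, $p_2=2(b_2+b_1^2)$, $p_3=2(b_3+2b_1b_2+b_1^3)$ are right, the reduced Schur function does give $b_2=(1-|b_1|^2)x$ and $b_3=(1-|b_1|^2)\bigl[(1-|x|^2)z-\overline{b_1}\,x^2\bigr]$, and substituting with $b_1=p_1/2\in[0,1]$ reproduces (\ref{p13}) and (\ref{p14}) with the exact constants $2$, $-1$, $2$. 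Your derivation of (\ref{p11}) from (\ref{p13}) by maximizing the affine function of $t=p_1^2\in[0,4]$ is precisely the Ma--Minda/Keogh--Merkes argument, and the sharpness verification is correct. Two points deserve explicit mention, both minor. First, you rightly flag the rotation normalization $p_1\in[0,2]$: the paper states the lemma without it, which is strictly speaking an imprecision in the paper (for complex $p_1$ the factor would have to be $4-|p_1|^2$ with adjusted phases), and your observation that the normalization is what the stated form presupposes is the correct reading. Second, your Schur reduction is undefined in the degenerate cases $|b_1|=1$ or $|x|=1$; there the maximum principle forces $\psi$ (respectively $\psi_1$) to be constant, so $b_2=b_3=0$ (respectively the $z$-term drops out) and (\ref{p13})--(\ref{p14}) hold trivially with arbitrary admissible $x$ or $z$ --- a one-line remark you should add, but not a gap in substance. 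What your approach buys over the Toeplitz-determinant route is transparency about where $x$ and $z$ live (they are literally Schur parameters), at the cost of the slightly delicate bookkeeping in the reduction that you correctly identified as the main hazard and carried out accurately.
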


   
\begin{lemma}(cf.\cite{15})\label{b1}
    Let $\phi \in \mathcal{P}$  and of the form (\ref{p2}), then
    \begin{eqnarray*}\label{p15}
        \mid p_{2}-\nu p_{1}^{2}\mid\leq
        \begin{cases}-4\nu+2
            &; \text{if}~~\nu<0,\\2&; \text{if}~~0\leq\nu\leq1,\\\ 4\nu-2&; \text{if}~~\nu>1.
        \end{cases}
    \end{eqnarray*}
    For $\nu < 0$ or $\nu > 1,$ equality holds if and only if $\phi(z)$ is $\frac{1+z}{1-z}$ or one of its rotations.
    For $0 < \nu < 1,$ the equality holds if and only if $\phi(z)=\frac{1+z^{2}}{1-z^{2}}$ or one of its rotations.
    For $\nu=0,$ the equality holds if and only if $\phi(z)=(\frac{1}{2}+\frac{\eta}{2})\frac{1+z}{1-z}+(\frac{1}{2}-\frac{\eta}{2})\frac{1-z}{1+z}~~~~(0\leq \eta \leq1)$ or one of its rotations. For  $\nu=1,$ the equality holds if and only if $\phi$ is the reciprocal of one of the functions  such that the equality holds in the case $\nu=0$.

    Moreover, when $0<\nu<1,$ although the above upper bound is sharp, it can also be improved as follows:
    $$\mid p_{2}-\nu p_{1}^{2}\mid+\nu \mid p_{1} \mid^{2} \leq2,\qquad(0<\nu\leq \frac{1}{2}),$$
    and
    $$\mid p_{2}-\nu p_{1}^{2}\mid + (1-\nu)\mid p_{1} \mid^{2}\leq2, \qquad(\frac{1}{2}<\nu\leq1).$$

\end{lemma}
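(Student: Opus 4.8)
The plan is to reduce everything to the Carath\'eodory representation already recorded in Lemma \ref{k1}. Starting from $(\ref{p13})$, namely $p_2 = \tfrac{1}{2}\{p_1^2 + (4-p_1^2)x\}$ for some $x$ with $|x| \le 1$, I would write
\[
p_2 - \nu p_1^2 = \Bigl(\tfrac{1}{2} - \nu\Bigr)p_1^2 + \tfrac{1}{2}(4 - p_1^2)x.
\]
Taking moduli, using $|x| \le 1$ together with $|p_1| \le 2$ from $(\ref{p10})$, and writing $t := |p_1| \in [0,2]$, the triangle inequality gives
\[
|p_2 - \nu p_1^2| \le \Bigl|\tfrac{1}{2} - \nu\Bigr|\,t^2 + \tfrac{1}{2}(4 - t^2) = \Bigl(\bigl|\tfrac{1}{2} - \nu\bigr| - \tfrac{1}{2}\Bigr)t^2 + 2 =: F(t).
\]
The whole problem now collapses to maximizing the quadratic $F(t)$ over $t \in [0,2]$, and the sign of the leading coefficient $\bigl|\tfrac{1}{2}-\nu\bigr| - \tfrac{1}{2}$ decides the outcome.

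Next I would split into the stated ranges according to that sign. For $\nu < 0$ one has $\bigl|\tfrac{1}{2}-\nu\bigr| - \tfrac{1}{2} = -\nu > 0$, so $F$ is increasing and its maximum is $F(2) = 2 - 4\nu$; for $\nu > 1$ the coefficient equals $\nu - 1 > 0$, giving $F(2) = 4\nu - 2$; and for $0 \le \nu \le 1$ the coefficient is nonpositive (it equals $-\nu$ on $[0,\tfrac{1}{2}]$ and $\nu - 1$ on $[\tfrac{1}{2},1]$), so $F$ is nonincreasing and $F(0) = 2$. This produces the three-case estimate at once. The two sharpened inequalities then require no further computation: on $0 < \nu \le \tfrac{1}{2}$ the bound reads $F(t) = 2 - \nu t^2$, whence $|p_2 - \nu p_1^2| + \nu|p_1|^2 \le 2$, and on $\tfrac{1}{2} < \nu \le 1$ it reads $F(t) = 2 - (1-\nu)t^2$, whence $|p_2 - \nu p_1^2| + (1-\nu)|p_1|^2 \le 2$.

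The main obstacle is not the bound itself but the sharpness clauses, which require tracking exactly when the two inequalities I used---the triangle inequality and $|p_1| \le 2$---become equalities. For $\nu < 0$ and $\nu > 1$ the maximum of $F$ sits at $t = 2$, forcing $|p_1| = 2$; combined with the alignment condition in the triangle inequality this isolates $\phi(z) = (1+z)/(1-z)$ and its rotations, which one checks directly since this function has $p_1 = p_2 = 2$. For $0 < \nu < 1$ the maximum sits at $t = 0$, i.e.\ $p_1 = 0$, realized by $\phi(z) = (1+z^2)/(1-z^2)$ with $p_2 = 2$. The genuinely delicate endpoints are $\nu = 0$ and $\nu = 1$, where $F$ is identically $2$ and the extremal family is larger: here I would use the representation structure of $\mathcal P$ to show that equality forces $\phi$ to be the stated convex combination $(\tfrac{1}{2}+\tfrac{\eta}{2})\tfrac{1+z}{1-z} + (\tfrac{1}{2}-\tfrac{\eta}{2})\tfrac{1-z}{1+z}$ (respectively its reciprocal when $\nu = 1$), and this is the only place where the argument moves beyond the elementary estimate above.
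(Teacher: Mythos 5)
The paper never proves Lemma \ref{b1} at all: it is imported verbatim from Ma and Minda \cite{15}, so your proposal can only be compared with the classical argument. On the inequalities themselves your write-up is correct and complete, and it is essentially the standard route: the representation (\ref{p13}) is equivalent to the Carath\'eodory-class inequality $|p_2-\tfrac{1}{2}p_1^2|\leq 2-\tfrac{1}{2}|p_1|^2$, and your splitting $p_2-\nu p_1^2=(\tfrac{1}{2}-\nu)p_1^2+\tfrac{1}{2}(4-p_1^2)x$ followed by maximizing the quadratic $F(t)=\bigl(\bigl|\tfrac{1}{2}-\nu\bigr|-\tfrac{1}{2}\bigr)t^2+2$ over $t=|p_1|\in[0,2]$ is exactly how the three cases and both refinements are obtained in the literature. (Since $|p_2-\nu p_1^2|$ is invariant under the rotations $\phi(z)\mapsto\phi(e^{i\tau}z)$, normalizing $p_1\geq 0$ so as to apply (\ref{p13}) in its usual form is harmless; worth one sentence in a final version.)

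The genuine shortfall is the ``only if'' half of the equality clauses, which you announce but do not execute, and which is part of the statement. Concretely, four observations close it. For $\nu<0$ or $\nu>1$ the leading coefficient of $F$ is strictly positive, so equality forces $t=2$; note that at $t=2$ the term $\tfrac{1}{2}(4-t^2)|x|$ vanishes, so no separate ``alignment'' condition is needed (your remark on this point is slightly misplaced), and $|p_1|=2$ alone forces, via the Herglotz representation $p_k=2\int e^{-ik\theta}\,d\mu(\theta)$, that $\mu$ is a point mass, i.e. $\phi$ is a rotation of $(1+z)/(1-z)$. For $0<\nu<1$ the leading coefficient is strictly negative, so equality forces $p_1=0$ and $|p_2|=2$, which pins $\mu$ to equal masses at two antipodal points, i.e. a rotation of $(1+z^2)/(1-z^2)$. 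At $\nu=0$ equality is simply $|p_2|=2$, which confines $\mu$ to two antipodal points with arbitrary masses $\tfrac{1}{2}\pm\tfrac{\eta}{2}$, giving precisely the stated convex combination and its rotations. Finally $\nu=1$ reduces to $\nu=0$ in one line: if $\psi=1/\phi\in\mathcal{P}$ has coefficients $q_k$, then $q_1=-p_1$ and $q_2=p_1^2-p_2$, so $|p_2-p_1^2|=|q_2|$ is the $\nu=0$ functional evaluated on the reciprocal. With these steps written out, your proof of the full lemma is complete.
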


\section{Main Results}
Unless otherwise mentioned, throughout this sequel we assume that
both $a,~b \geq c >0$. We begin with the proof of Fekete-Szeg\"{o}
problem for the class $\mathcal{R}_{a,b}^{c}$.

\begin{theorem}\label{k1}
    If the function $f$ given by $(\ref{p1})$ belongs to the class $\mathcal{R}_{a,b}^{c},$ then for any $\mu \in \mathbb{C},$
    \begin{equation}\label{p16}
        \mid a_{3}- \mu  a_{2}^{2} \mid \leq \frac{(c)_{2}}{(a)_{2}(b)_{2}} max \left\{1, \frac{\mid ab(c+1)+\mu c (a+1)(b+1)\mid}{4a b(c+1)}\right\}.
    \end{equation}
    The estimate (\ref{p16}) is sharp.
    \begin{proof}
        If $f \in \mathcal{R}_{a,b}^{c}$ then by the definition of $\mathcal{R}_{a,b}^{c},$ satisfies the condition
        \begin{equation}\label{p17}
            \left|\left(\frac{\mathcal{I}_{a,b}^{c}f(z)}{z}\right)^{2}-1 \right| <1,~~~~\qquad(z \in
            \mathcal{U}).
        \end{equation}
        So now using $(\ref{p17})$ and definition of subordination that satisfies, the relation
        \begin{equation}\label{p18}
            \frac{\mathcal{I}_{a,b}^{c}f(z)}{z}= \sqrt{1+w(z)},
        \end{equation}
        where $w$ is analytic in $\mathcal{U}$ and satisfies the conditions of Schwarz lemma $w(0)=0$ and $\mid w(z) \mid <1$.
        Setting
        \begin{equation*}\label{p19}
            \phi(z)=\frac{1+w(z)}{1-w(z)}=1+p_{1}z+p_{2}z^{2}+\cdots,\qquad(z \in \mathcal{U}),
        \end{equation*}
        implies that $\phi \in \mathcal{P}.$ From the above expression, we get
        \begin{equation*}\label{p20}
            w(z)=\frac{\phi(z)-1}{\phi(z)+1}\qquad(z \in
            \mathcal{U}).
        \end{equation*}
        Therefore,  $(\ref{p18})$ gives
        \begin{equation}\label{p21}
            \frac{\mathcal{I}_{a,b}^{c}f(z)}{z}=\left(\frac{2\phi(z)}{1+\phi(z)}\right)^{1/2}\qquad(z \in \mathcal{U}).
        \end{equation}
        Which upon simplification and comparing the co-efficient of $z,$ $z^{2},$ $z^{3}$ both side of  $(\ref{p21})$
        yields.
        \begin{equation}\label{p24}
            a_{2}=\frac{c}{4ab}p_{1},
        \end{equation}
        \begin{equation}\label{p25}
            a_{3}=\frac{2(c)_{2}}{(a)_{2}(b)_{2}}\left(\frac{1}{4}p_{2}-\frac{5}{32}p_{1}^{2}\right),
        \end{equation}
        and
        \begin{equation}\label{p26}
            a_{4}=\frac{6(c)_{3}}{(a)_{3}(b)_{3}}\left(\frac{1}{4}p_{3}-\frac{5}{16}p_{1}p_{2}+\frac{13}{128}p_{1}^{3}\right).
        \end{equation}
        Thus, by using $(\ref{p24})$ and $(\ref{p25}),$ we get
        \begin{equation}\label{p27}
            \mid a_{3}- \mu  a_{2}^{2}\mid=\frac{1}{2}\frac{(c)_{2}}{(a)_{2}(b)_{2}}\left|p_{2}-\frac{5ab(c+1)+\mu c (a+1)(b+1)}{8ab(c+1)}p_{1}^{2}\right|.
        \end{equation}
        Now using $(\ref{p11})$ in $(\ref{p27}),$ we get
        \begin{equation*}\label{p28}
            \mid a_{3}- \mu  a_{2}^{2}\mid\leq\frac{(c)_{2}}{(a)_{2}(b)_{2}} \max \left\{1, ~\frac{\mid ab(c+1)+ \mu c (a+1)(b+1)\mid}{4ab(c+1)}\right\}.
        \end{equation*}
        The estimate $(\ref{p16})$ is sharp for the function $f \in \mathcal{A}$ defined in $\mathcal{U}$ by
        \begin{eqnarray*}
            f(z)=\begin{cases}z _{1} F_{2}(c,b,a;z)* 4z\sqrt{1+z^{2}};
                &\text{}~~\frac{ |ab(c+1)+\mu c (a+1)(b+1)|}{4ab(c+1)}\leq 1,\\ z _{1}F_{2}(c,b,a;z)* \frac{-z}{2}(1+z)^{-1};&\text{}~~ \frac{ |ab(c+1)+\mu c (a+1)(b+1)|}{4ab(c+1)}>1.\end{cases}\end{eqnarray*}
        This completes the proof of Theorem \ref{k1}.
    \end{proof}
\end{theorem}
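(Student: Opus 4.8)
The plan is to convert the problem into a Fekete--Szeg\"{o} estimate for a function of positive real part and then invoke the bound (\ref{p11}). First I would use the defining subordination $\mathcal{I}_{a,b}^{c}f(z)/z \prec \sqrt{1+z}$ to write $\mathcal{I}_{a,b}^{c}f(z)/z = \sqrt{1+w(z)}$ for a Schwarz function $w$, and then pass to the associated function $\phi = (1+w)/(1-w) = 1 + p_1 z + p_2 z^2 + \cdots \in \mathcal{P}$ of positive real part. Solving $w = (\phi-1)/(\phi+1)$ yields the representation $\mathcal{I}_{a,b}^{c}f(z)/z = \big(2\phi(z)/(1+\phi(z))\big)^{1/2}$, on which the coefficient computation is cleanest.

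Next I would expand both sides as power series and compare the coefficients of $z$ and $z^2$. On the left, the convolution definition (\ref{p3}) gives $\mathcal{I}_{a,b}^{c}f(z) = z + \sum_{k\ge 2}\frac{(a)_{k-1}(b)_{k-1}}{(c)_{k-1}(k-1)!}\,a_k z^{k}$, so the coefficients of $z$ and $z^2$ in $\mathcal{I}_{a,b}^{c}f(z)/z$ are $\frac{ab}{c}a_2$ and $\frac{(a)_2(b)_2}{2(c)_2}a_3$. On the right I would use $2\phi/(1+\phi) = 1 + \tfrac12 P - \tfrac14 P^2 + \cdots$ with $P = \phi - 1$, followed by the binomial expansion $\sqrt{1+Q} = 1 + \tfrac12 Q - \tfrac18 Q^2 + \cdots$; collecting terms produces the $z$-coefficient $\tfrac14 p_1$ and the $z^2$-coefficient $\tfrac14 p_2 - \tfrac{5}{32}p_1^2$. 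Matching then gives $a_2 = \frac{c}{4ab}p_1$ and $a_3 = \frac{2(c)_2}{(a)_2(b)_2}\big(\tfrac14 p_2 - \tfrac{5}{32}p_1^2\big)$.

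Then I would assemble the functional: substituting these expressions and pulling out $\frac{(c)_2}{2(a)_2(b)_2}$ should collapse $|a_3 - \mu a_2^2|$ to $\frac{(c)_2}{2(a)_2(b)_2}\,|p_2 - \nu p_1^2|$ with $\nu = \frac{5ab(c+1)+\mu c(a+1)(b+1)}{8ab(c+1)}$. The only delicate point is the bookkeeping that rewrites $\frac{\mu c^2 (a)_2(b)_2}{8a^2 b^2 (c)_2}$ as $\frac{\mu c(a+1)(b+1)}{8ab(c+1)}$ using $(a)_2 = a(a+1)$ and the like; I expect the main obstacle to be organizational rather than conceptual. Applying (\ref{p11}) gives $|p_2 - \nu p_1^2| \le 2\max\{1,|2\nu-1|\}$, and since $2\nu - 1 = \frac{ab(c+1)+\mu c(a+1)(b+1)}{4ab(c+1)}$, the factor $2$ absorbs the $\tfrac12$ and reproduces exactly (\ref{p16}).

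Finally, for sharpness I would identify which extremal $\phi$ realizes each branch of the maximum. The value $1$ is attained by the choice $w(z)=z^2$, that is $\mathcal{I}_{a,b}^{c}f(z)/z = \sqrt{1+z^2}$, while the value $|2\nu-1|$ is attained by $w(z)=z$, that is $\mathcal{I}_{a,b}^{c}f(z)/z = \sqrt{1+z}$. In each case I would recover the extremal $f$ by convolving with the coefficient-reciprocal of the Hohlov kernel $z\,{}_2F_1(a,b;c;z)$, which is expressible through a Gauss hypergeometric function, yielding the two extremal functions recorded in the statement.
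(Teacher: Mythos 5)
Your proposal follows the paper's proof essentially step for step: the same passage from the subordination to a Schwarz function $w$ and then to $\phi=(1+w)/(1-w)\in\mathcal{P}$, the same representation $\mathcal{I}_{a,b}^{c}f(z)/z=\bigl(2\phi(z)/(1+\phi(z))\bigr)^{1/2}$, the identical coefficient identities (\ref{p24})--(\ref{p25}), the same reduction to $|p_{2}-\nu p_{1}^{2}|$ with $\nu=\frac{5ab(c+1)+\mu c(a+1)(b+1)}{8ab(c+1)}$, the same application of (\ref{p11}), and the same sharpness scheme via $w(z)=z^{2}$ and $w(z)=z$ followed by convolution against the reciprocal kernel (your computations of the expansion of $\sqrt{2\phi/(1+\phi)}$ and of $2\nu-1$ check out). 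The only discrepancy is cosmetic: the true coefficient-reciprocal of the Hohlov kernel is a generalized hypergeometric ${}_{3}F_{2}(c,1,1;a,b;z)$ rather than a Gauss ${}_{2}F_{1}$, an imprecision the paper itself shares in writing ${}_{1}F_{2}(c,b,a;z)$ for its extremal functions.
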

For $\mu=1,$ the bound  $|H_{2}(1)|$ directly follows from Theorem \ref{k1}.
\begin{corollary}\label{a2}
If the function $f$ given by $(\ref{p1})$ belongs to the class $\mathcal{R}_{a,b}^{c},$ then
 \begin{equation*}
        |H_{2}(1)|=|a_{3}-a_{2}^{2}|\leq \frac{(c)_{2}}{(a)_{2} (b)_{2}} .
    \end{equation*}
\end{corollary}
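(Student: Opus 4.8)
The plan is to obtain the corollary purely as the specialization $\mu = 1$ of Theorem \ref{k1}; no fresh coefficient analysis is needed, since the representations $(\ref{p24})$ and $(\ref{p25})$ have already been packaged into the Fekete-Szeg\"{o} bound $(\ref{p16})$. Putting $\mu = 1$ there gives
\begin{equation*}
|a_3 - a_2^2| \leq \frac{(c)_2}{(a)_2(b)_2}\,\max\left\{1,\ \frac{|ab(c+1) + c(a+1)(b+1)|}{4ab(c+1)}\right\},
\end{equation*}
so the entire task is to show that the second entry of the maximum does not exceed $1$, whereupon the maximum collapses to $1$ and the asserted estimate drops out.

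First I would invoke the standing convention $a, b \geq c > 0$, under which every factor in the quotient is positive; the modulus therefore disappears and the inequality to be proved, after clearing the positive denominator $4ab(c+1)$, is the elementary algebraic statement
\begin{equation*}
c(a+1)(b+1) \leq 3\,ab\,(c+1).
\end{equation*}
Expanding both sides and cancelling the common term $abc$, this is equivalent to $ac + bc + c \leq 2abc + 3ab$. I would then dominate the left-hand side termwise: from $c \leq b$ one gets $ac \leq ab$, from $c \leq a$ one gets $bc \leq ab$, and (once $a \geq 1$, as holds for every operator listed after $(\ref{p3})$) one has $c \leq a \leq ab$; adding these yields $ac + bc + c \leq 3ab \leq 2abc + 3ab$, as required.

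The step I expect to be the crux is precisely this termwise domination, and in particular the control of the stray term $c$: the bounds $ac \leq ab$ and $bc \leq ab$ use only $a, b \geq c$, but absorbing $c$ into $3ab$ is where the magnitude of the parameters enters. I would therefore be careful to pin down the hypothesis that actually makes the maximum collapse, namely the natural regime $a, b \geq c \geq 1$ into which all the specializations of $\mathcal{I}_{a,b}^{c}$ fall, since for very small positive $c$ the quotient can exceed $1$ and the first branch of the maximum would cease to dominate. Sharpness then requires no separate argument: in the regime where the quotient is $\leq 1$, Theorem \ref{k1} already exhibits an extremal function attaining equality in $(\ref{p16})$, and at $\mu = 1$ the same function forces $|a_3 - a_2^2|$ to equal $(c)_2/((a)_2(b)_2)$.
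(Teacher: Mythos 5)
Your route coincides with the paper's: the paper's entire justification for Corollary \ref{a2} is the one line ``for $\mu=1$ the bound directly follows from Theorem \ref{k1}'', with the collapse of the maximum to $1$ left completely unverified. You supply exactly that missing step, correctly reducing it to $c(a+1)(b+1)\le 3ab(c+1)$, equivalently $ac+bc+c\le 2abc+3ab$ after cancelling $abc$. Your termwise domination is sound except for one small slip: $a\le ab$ requires $b\ge 1$, not $a\ge 1$; under your hypothesis $a\ge 1$ the correct chain for the stray term is $c\le b\le ab$ (symmetrically $b\ge 1$ gives $c\le a\le ab$), so any of $a\ge 1$, $b\ge 1$, or your stated $c\ge 1$ repairs it. More importantly, your caveat about small parameters is a genuine observation that the paper misses: under only the standing convention $a,b\ge c>0$ the quotient can exceed $1$ --- for instance $a=b=c<1/2$ gives $\frac{ab(c+1)+c(a+1)(b+1)}{4ab(c+1)}=\frac{2a+1}{4a}>1$ --- and since Theorem \ref{k1} is asserted to be sharp, in that regime the true bound is strictly larger than $\frac{(c)_2}{(a)_2(b)_2}$, so the corollary as printed implicitly needs the extra hypothesis you identify (which all the operator specializations listed after (\ref{p3}) do satisfy, as does the regime $a\ge c\ge 1/2$ assumed later in Theorem \ref{k8}). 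Your sharpness remark is also consistent with the paper, since the extremal function $z\,{}_{1}F_{2}(c,b,a;z)*4z\sqrt{1+z^{2}}$ from Theorem \ref{k1} serves at $\mu=1$ whenever the quotient is at most $1$. In short: same approach as the paper, executed more carefully, with one trivially fixable inequality misattribution.
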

\begin{remark}\label{k2}
    Putting $b=1$ in Theorem $\ref{k1}$, we get the sharp bound for the function belonging to the subclass of $\mathcal A$ associated with   the Carlson-Shaffer operator (cf. \cite{22a}, Theorem
    3). 
\end{remark}
Further, by specializing the parameters $a$, $b$ and $c$ we have the
following sharp bounds:
\begin{remark}\label{k3}
    Putting $a=\lambda +1,$ $ b=1,$ $c=1$ in the Theorem $\ref{k1}$, the required sharp bound for the function belonging to the subclass of $\mathcal A$ associated with Ruscheweyh
    operator is given by
    \begin{equation*}
        \mid a_{3}- \mu  a_{2}^{2}\mid \leq \frac{1}{(\lambda+1)_{2}} \max\left\{1, ~\frac{|(\lambda+1)+\mu (\lambda+2)|}{4(\lambda+1)}\right\}.
    \end{equation*}
\end{remark}
\begin{remark}\label{k4}
    Putting $a=1,$ $b=1+\eta,$ $c=2+\eta$ in the Theorem $\ref{k1}$, the required sharp bound for the function belonging to the subclass of $\mathcal A$ associated with Bernardi
    operator is given by
    \begin{equation*}
        \mid a_{3}- \mu  a_{2}^{2}\mid \leq \frac{(3+\eta)}{2(1+\eta)} \max \left\{1, ~\frac{|(1+\eta)(3+\eta)+2\mu (2+\eta)^{2}|}{4(1+\eta)(3+\eta)}\right\}.
    \end{equation*}
\end{remark}
\begin{remark}\label{k5}
    Putting $a=2,$ $b=1$ $c=1$ in the Theorem $\ref{k1}$, the required sharp bound for the function belonging to the subclass of $\mathcal A$ associated with Alexander differential
    operator (cf. \cite{s1}, Theorem 2.1) is given by
    \begin{equation*}
        \mid a_{3}- \mu  a_{2}^{2}\mid \leq \frac{1}{6} \max \left\{1, ~\frac{|2+3\mu|}{8}\right\}.
    \end{equation*}
\end{remark}
\begin{corollary}\label{k6}
    If the function $f$ given by $(\ref{p1})$ belongs to the class $\mathcal{R}_{a,b}^{c},$ then it follows from $(\ref{p24})$ that $\mid a_{2} \mid \leq \frac{c}{•2ab}$ and Theorem  \ref{k1} gives $\mid a_{3} \mid \leq \frac{(c)_{2}}{(a)_{2}(b)_{2}}$. The estimate for $\mid a_{2} \mid$ is sharp when $f$ is defined by
    \begin{equation*}\label{p29}
        f(z)= z _{1}F_{2}(c,b,a;z)* \frac{-z}{2} (1+z)^{-1}\qquad(z \in \mathcal{U}),
    \end{equation*}
    and the estimate for $\mid a_{3} \mid $ is sharp for the function $g$ defined by
    \begin{equation*}\label{p30}
        g(z)=z _{1}F_{2}(c,b,a;z)* 4z\sqrt{1+z^{2}}\qquad(z \in \mathcal{U}).
    \end{equation*}
\end{corollary}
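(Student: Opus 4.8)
The plan is to obtain both coefficient bounds directly from the representations of $a_2$ and $a_3$ in terms of the Carath\'eodory coefficients $p_k$ that were already established in the proof of Theorem \ref{k1}, and then to certify sharpness by exhibiting functions for which equality is forced. For the bound on $|a_2|$ I would begin from the relation $(\ref{p24})$, namely $a_2=\frac{c}{4ab}p_1$. Taking moduli and invoking the elementary estimate $|p_1|\le 2$ from $(\ref{p10})$ of the first lemma gives at once $|a_2|\le\frac{c}{4ab}\cdot 2=\frac{c}{2ab}$. This part is immediate and presents no difficulty.

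For $|a_3|$ I would specialize the Fekete--Szeg\"o estimate $(\ref{p16})$ to the value $\mu=0$, so that $|a_3|=|a_3-0\cdot a_2^2|\le\frac{(c)_2}{(a)_2(b)_2}\max\bigl\{1,\frac{|ab(c+1)|}{4ab(c+1)}\bigr\}$. Under the standing hypothesis $a,b\ge c>0$ the quantity $\frac{ab(c+1)}{4ab(c+1)}=\frac14$ is strictly less than $1$, so the maximum equals $1$ and the bound collapses to $|a_3|\le\frac{(c)_2}{(a)_2(b)_2}$. One could instead argue directly from $(\ref{p25})$ together with the estimate $(\ref{p11})$ for $|p_2-\nu p_1^2|$, but reading the result off Theorem \ref{k1} is the cleanest route.

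The remaining task is sharpness, and here I would identify the extremal configurations of the $p_k$ and then deconvolve. The bound on $|a_2|$ is attained exactly when $|p_1|=2$, which is realized by the Carath\'eodory function $\phi(z)=\frac{1+z}{1-z}$, equivalently by the Schwarz function $w(z)=z$, so that $\frac{\mathcal{I}_{a,b}^{c}f(z)}{z}=\sqrt{1+z}$; recovering $f$ by convolving the target with the Hadamard inverse of the kernel $z\,{}_2F_1(a,b;c;z)$ produces the function displayed in the statement. The bound on $|a_3|$ is attained when $p_1=0$ and $p_2=2$, realized by $\phi(z)=\frac{1+z^2}{1-z^2}$, i.e.\ $w(z)=z^2$ and $\frac{\mathcal{I}_{a,b}^{c}g(z)}{z}=\sqrt{1+z^2}$, which yields the function $g$ of the statement. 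I would then substitute these choices of $p_1,p_2$ back into $(\ref{p24})$ and $(\ref{p25})$ to confirm that equality is indeed reached, and check that each candidate lies in $\mathcal{R}_{a,b}^{c}$ by verifying that $\mathcal{I}_{a,b}^{c}(\cdot)/z$ maps onto the lemniscate boundary $|w^2-1|=1$.

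The one genuine obstacle I anticipate is the bookkeeping in this last step: forming the convolution (Hadamard) inverse of $z\,{}_2F_1(a,b;c;z)$ correctly and matching the resulting series against the closed forms $z\sqrt{1+z}$ and $z\sqrt{1+z^2}$, so that the listed extremal functions genuinely reproduce the extremal values of $p_1$ and $p_2$. Everything else reduces to the one-line specializations $\mu=0$ and $|p_1|=2$ already available from Theorem \ref{k1} and the lemmas.
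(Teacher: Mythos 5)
Your proposal is correct and follows essentially the same route as the paper, which proves the corollary implicitly by citing $(\ref{p24})$ together with $|p_1|\le 2$ for the bound $|a_2|\le \frac{c}{2ab}$, reading off $|a_3|\le\frac{(c)_2}{(a)_2(b)_2}$ as the $\mu=0$ case of Theorem \ref{k1}, and certifying sharpness via the extremal configurations $w(z)=z$ (so that $\mathcal{I}_{a,b}^{c}f(z)/z=\sqrt{1+z}$) and $w(z)=z^2$ (so that $\mathcal{I}_{a,b}^{c}g(z)/z=\sqrt{1+z^2}$), exactly as you describe. One phrasing slip only: membership in $\mathcal{R}_{a,b}^{c}$ requires $\mathcal{I}_{a,b}^{c}f/z$ to map $\mathcal{U}$ \emph{into} the open region $|w^2-1|<1$ (equivalently, subordination to $\sqrt{1+z}$), not \emph{onto} the lemniscate boundary, and this holds automatically for your candidates since $w(z)=z$ and $w(z)=z^2$ are Schwarz functions.
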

We will proceed  this Theorem in the case of $\mu \in \mathbb{R}$.
\begin{theorem}\label{k7}
    Let $\mu \in \mathbb{R}$. If the function $f$ given by $(\ref{p1})$ belongs to the class $\mathcal{R}_{a,b}^{c}$, then
    \begin{eqnarray*}\label{p31}
        \mid a_{3}-\mu a_{2}^{2}\mid \leq \begin{cases}\frac{(c)_{2}}{2(a)_{2}(b)_{2}}\left(\frac{{-(c+1)ab+\mu c
        (a+1)(b+1)}}{2ab(c+1)}\right);
            &\text{}~~ \mu <\frac{-5(c+1)ab}{c(a+1)(b+1)},\\\frac{(c)_{2}}{(a)_{2}(b)_{2}};&\text{}~~\frac{-5(c+1)ab}{c(a+1)(b+1)}\leq \mu\leq \frac{3(c+1)ab}{c(a+1)(b+1)},\\ \frac{(c)_{2}}{2(a)_{2}(b)_{2}} \left(\frac{(c+1)ab+\mu (a+1)(b+1)c}{2ab(c+1)}\right);&\text{}~~\mu>\frac{3ab(c+1)}{c(a+1)(b+1)}.\end{cases}\end{eqnarray*}
    The estimate is sharp for the functions $f$ defined in $\mathcal{U}$ by
    \begin{equation*}\label{p32}
        f(z)=\begin{cases}z _{1}F_{2}(c,b,a;z)* 4z\sqrt{1+z^{2}};
            &\text{}~~\frac{-5(c+1)ab}{c(a+1)(b+1)}\leq \mu\leq \frac{3(c+1)ab}{c(a+1)(b+1)},\\ z _{1}F_{2}(c,b,a;z)* \frac{-z}{2}(1+z)^{-1};&\text{}~~ \mu <\frac{-5(c+1)ab}{c(a+1)(b+1)}~~ \text{or}~~ \mu>\frac{3ab(c+1)}{c(a+1)(b+1)}. \end{cases}
    \end{equation*}
    \begin{proof}
        Since from  $(\ref{p27})$, we have
        \begin{equation*}\label{r1}
            \mid a_{3}- \mu  a_{2}^{2}\mid=\frac{1}{2}\frac{(c)_{2}}{(a)_{2}(b)_{2}}\left|p_{2}-\frac{5ab(c+1)+\mu c (a+1)(b+1)}{8ab(c+1)}p_{1}^{2}\right|.
        \end{equation*}
        The result follows upon applications of  Lemma \ref{b1} in
        (\ref{p27}).
        This completes the proof of Theorem \ref{k7}.
    \end{proof}
\begin{remark}
Putting $b=1$ in the Theorem $\ref{k7}$, we obtained the recent
result, due to Patel and Sahoo (cf. \cite{22a}, Corollary 5).
\end{remark}
\end{theorem}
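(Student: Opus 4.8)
The plan is to specialize the complex Fekete-Szeg\"{o} estimate of Theorem \ref{k1} to real $\mu$, replacing the crude inequality (\ref{p11}) by the sharper, sign-sensitive estimate of Lemma \ref{b1}. The starting point is already in hand: the representation (\ref{p27}) writes $|a_3 - \mu a_2^2|$ as $\tfrac{1}{2}\tfrac{(c)_2}{(a)_2(b)_2}\,|p_2 - \nu p_1^2|$ with
$$\nu = \frac{5ab(c+1) + \mu c(a+1)(b+1)}{8ab(c+1)}.$$
Since $a,b \geq c > 0$, the denominator $8ab(c+1)$ is strictly positive, so $\nu$ is a well-defined real number that depends affinely on $\mu$ with positive leading coefficient $\tfrac{c(a+1)(b+1)}{8ab(c+1)}$; hence $\nu$ is strictly increasing in $\mu$. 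This monotonicity is exactly what lets the three-branch structure of Lemma \ref{b1} translate into three consecutive $\mu$-intervals without any reordering of cases.

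First I would locate the two break-points by solving $\nu = 0$ and $\nu = 1$. Clearing the positive denominator, $\nu = 0$ gives $5ab(c+1) + \mu c(a+1)(b+1) = 0$, i.e. $\mu = \tfrac{-5ab(c+1)}{c(a+1)(b+1)}$, while $\nu = 1$ gives $5ab(c+1) + \mu c(a+1)(b+1) = 8ab(c+1)$, i.e. $\mu = \tfrac{3ab(c+1)}{c(a+1)(b+1)}$. Because $\nu$ increases with $\mu$, the regime $\nu < 0$ is precisely $\mu < \tfrac{-5ab(c+1)}{c(a+1)(b+1)}$, the regime $0 \leq \nu \leq 1$ is the middle interval, and $\nu > 1$ is $\mu > \tfrac{3ab(c+1)}{c(a+1)(b+1)}$, matching the three cases of the statement exactly.

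Next I would insert the corresponding branch of Lemma \ref{b1} and simplify. On the middle interval the lemma gives $|p_2 - \nu p_1^2| \leq 2$, which yields the flat bound $\tfrac{(c)_2}{(a)_2(b)_2}$ immediately. For $\nu > 1$ the bound $4\nu - 2$, after substituting $\nu$ and combining over the common denominator $2ab(c+1)$, collapses to $\tfrac{ab(c+1) + \mu c(a+1)(b+1)}{2ab(c+1)}$, reproducing the third case; the case $\nu < 0$ is handled symmetrically starting from the bound $-4\nu + 2$. The only care required here is sign bookkeeping while combining fractions, which is routine.

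Finally, sharpness. The equality clause of Lemma \ref{b1} identifies the extremal Carath\'eodory function in each regime: on the middle interval equality forces $\phi(z) = (1+z^2)/(1-z^2)$, so $p_1 = 0$, $p_2 = 2$, whereas on the two outer regimes equality forces $\phi(z) = (1+z)/(1-z)$, so $p_1 = p_2 = 2$, or a rotation. Tracing each such $\phi$ back through (\ref{p21}) produces $\mathcal{I}_{a,b}^{c}f(z)/z = \sqrt{1+z^2}$ and $\sqrt{1+z}$ respectively, and inverting the Hohlov operator by convolution recovers the two functions of the form $z\,{}_1F_2(c,b,a;z) * (\cdot)$ displayed in the statement. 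I do not expect a genuine obstacle: the whole argument is a specialization of Theorem \ref{k1}, and the single point to watch is keeping the affine change of variable $\mu \mapsto \nu$ and its inverse consistent when reading off the intervals and simplifying the extremal bounds.
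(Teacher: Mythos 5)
Your proposal is correct and is essentially the paper's own proof: the paper likewise starts from (\ref{p27}), reads off $\nu = \frac{5ab(c+1)+\mu c(a+1)(b+1)}{8ab(c+1)}$, and invokes Lemma \ref{b1}, and your added details (monotonicity of $\nu$ in $\mu$, the break-points $\nu=0$ and $\nu=1$, the simplification $4\nu-2 = \frac{ab(c+1)+\mu c(a+1)(b+1)}{2ab(c+1)}$, and tracing the extremal Carath\'eodory functions back through (\ref{p21})) are all sound. One small observation your ``symmetric'' computation would surface if carried out: the branch $\nu<0$ yields the numerator $-(c+1)ab-\mu c(a+1)(b+1)$ (which is positive in that regime), so the first case as printed in the theorem, with $+\mu c(a+1)(b+1)$, contains a sign typo that your method corrects.
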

In the following theorem, we find sharp upper bound to the second Hankel determinant for the class $\mathcal{R}_{a,b}^{c}.$
\begin{theorem}\label{k8}
    If $a\geq c\geq 1/2 $ and the function $f$ given by $(\ref{p1})$ belongs to the class $\mathcal{R}_{a,b}^{c},$ then
    \begin{equation}\label{p33}
        |a_{2}a_{4}-a_{3}^{2}|\leq \left\{\frac{(c)_{2}}{(a)_{2}(b)_{2}}\right\}^{2}.
    \end{equation}
    The estimate in $(\ref{p33})$ is sharp for the functions $g,$ given by$$g(z)=z _{1}F_{2}(c,b,a;z)* 4z\sqrt{1+z^{2}}~~~~~~~~~~~~\qquad(z \in \mathcal{U}).$$
    \begin{proof}
        Let the function $f \in \mathcal{R}_{a,b}^{c} .$ From $(\ref{p24}),$ $(\ref{p25})$ and $(\ref{p26}),$ we get
        \begin{equation}\label{p34}
            |a_{2}a_{4}-a_{3}^{2}|=\frac{3}{8}\left|\frac{c}{a}\frac{(c)_{3}}{(a)_{3}b(b)_{3}}\left(p_{1}p_{3}-\frac{5}{4}p_{1}^{2}p_{2}+\frac{13}{32}p_{1}^{4}\right)-\left\{\frac{(c)_{2}}{(a)_{2}(b)_{2}}\right\}^{2}\frac{1}{4}\left(p_{2}^{2}-\frac{5}{4}p_{1}^{2}p_{2}+\frac{25}{64}p_{1}^{4}\right)\right|.
        \end{equation}
        Since the function $\phi(z)\in \mathcal{P}.$ We assume that $p_{1}>0$ and $p_{1}=p~~(0\leq p\leq2).$ Now by using $(\ref{p13})$ and $(\ref{p14})$ in $(\ref{p34}),$ we get
        \begin{eqnarray}\label{p35}
            |a_{2}a_{4}-a_{3}^{2}|&=& \left| \frac{3}{32}\frac{c}{a}\frac{(c)_{3}}{(a)_{3}b(b)_{3}}{ \left\{p^{4}+2(4-p^{2})p^{2}x-(4-p^{2})p^{2}x^{2}+2(4-p^{2})(1-\mid x \mid ^{2})pz \right\}}
            \right.\nonumber\\&& \left. +\frac{5c(c)_{2}}{32a(a)_{2}b(b)_{2}}\left\{\frac{-3(c+2)(a+1)(b+1)+2(c+1)(a+2)(b+2)}{2(a+1)(a+2)(b+1)(b+2)}\right\}\right.\nonumber\\&& \left.\{p^{4}+(4-p^{2})p^{2}x\}-\frac{1}{16}\left\{\frac{(c)_{2}}{(a)_{2}(b)_{2}}\right\}^{2}\{p^{4}+(4-p^{2})^{2}x^{2}+2p^{2}(4-p^{2})x\} \right.\nonumber\\&&
            \left.+\frac{(39(c+2)(a+1)(b+1)-25(c+1)(a+2)(b+2))c(c)_{2}}{256(a)_{3}(a)_{2}(b)_{3}(b)_{2}}p^{4}\right|,
        \end{eqnarray}
        for some $x~~(\mid x \mid \leq1)$ and for some $z~~(\mid z \mid \leq1)$. Applying the triangle inequality in $(\ref{p35})$ and replacing  $ \mid x \mid$ by $y$ in the equation , we get
        \begin{eqnarray}\label{p36}
            |a_{2}a_{4}-a_{3}^{2}|&\leq & \frac{c(c)_{2}}{16 a (a)_{2}b(b)_{2}}\left\{\frac{2abc+ac+bc-c+5ab+4a+4b+2}{16(a+1)_{2}(b+1)_{2}}\right\}p^{4}+\frac{c(c)_{2}}{64(a)_{2}(a)_{3}(b)_{2}(b)_{3}}\nonumber\\&&(4-p^{2})p^{2}y\{abc-ac-bc-5c+4ab+2a+2b-2\} +\frac{(4-p^{2})c(c)_{2}}{32(a)_{2}(a)_{3}(b)_{2}(b)_{3}}y^{2}\nonumber\\&&\{3(c+2)(a+1)(b+1)3p(p-2)+2(4-p^{2})(c+1)(a+2)(b+2)\}+\nonumber\\&&\frac{3c(c)_{3}}{16(a)_{3}a(b)_{3}b}(4-p^{2})p:= G(p,y)~~~~(0\leq p\leq2 ; 0\leq y\leq 1).~~(say)
        \end{eqnarray}
        Next we maximize the function  $G(p,y)$ on the closed rectangle $[0,2]\times [0,1].$ 
Indeed, for $0 \leq p \leq 1$ and $0 \leq y \leq 1,$ we have
        \begin{eqnarray*}\label{p37}
            \frac{\partial G}{\partial y}&=&\frac{c(c)_{2}(4-p^{2})}{8(a)_{2}(a)_{3}(b)_{2}(b)_{3}}\Big\{\frac{p^{2}}{8}(abc-ac-bc-5c+4ab+2a+2b-2)
            \\ & +&y
            \Big(3(c+2)(a+1)(b+1)3p(p-2)+2(4-p^{2})(c+1)(a+2)(b+2)\Big) \Big\} > 0.
        \end{eqnarray*}
  Which clearly shows that $G(p,y)$ cannot attain maximum in the interior of the closed rectangle $[0,2] \times [0,1].$
        \begin{equation*}\label{p38}
            \max_{0\leq y \leq 1} G(p,y)=G(p,1)=F(p) ~~~~~(say),
        \end{equation*}
        Therefore maximum must be attain on the boundary.Thus for fixed $p,~~~(0 \leq p \leq 2),$ we have 
        \begin{eqnarray}\label{p39}
                    F(p)&=&\frac{c(c)_{2}p^{4}}{16 a (a)_{2}b(b)_{2}}\left\{\frac{-10abc+13ac+13bc+59c-43ab-20a-20b+26}{•16(a+1)(a+2)(b+1)(b+2)}\right\}\nonumber\\ &+&\frac{c(c)_{2}p^{2}}{2 a (a)_{2}b(b)_{2}}\left\{\frac{-3abc-9ac-11bc-31c+6a-6b-18}{•8(a+1)(a+2)(b+1)(b+2)}\right\}+ \frac{c(c)_{2}(c+1)}{ (a)_{2} (a)_{2}(b)_{2}(b)_{2}}.
        \end{eqnarray}
   Differentiating (\ref{p39}) partially w.r.t. $p$ and equating to zero yields,
        \begin{multline*}\label{p40}
            \frac{\partial F}{\partial p}=\frac{c(c)_{2}}{ a (a)_{2}b(b)_{2}}p\Bigg[\left\{\frac{-10abc+13ac+13bc+59c-43ab-20a-20b+26}{64(a+1)_{2}(b+1)_{2}}\right\}p^{2} \\+\left\{\frac{-3ab-9ac-11bc-31c+6a-6b-18}{8(a+1)_{2}(b+1)_{2}}\right\}\Bigg]=0,
        \end{multline*}
       which implies that either $p=0$ or

        $$p^{2}=\frac{8(-3abc-9ac-11bc-31c+6a-6b-18)}{(-10abc+13ac+13bc+59c-43ab-20a-20b+26)}.$$
         Further, we have  $F^{\prime \prime}(0)<0.$ Thus the maximum value of $F$ is attained at $p=0.$ Therefore,  the upper bound in $(\ref{p36})$ corresponds to $p=0$ and $y=1$ becames
        $$|a_{2}a_{4}-a_{3}^{2}|\leq \left\{\frac{(c)_{2}}{(a)_{2}(b)_{2}}\right\}^{2}.$$
        This completes the proof of Theorem \ref{k8}.
    \end{proof}
\end{theorem}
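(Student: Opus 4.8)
The plan is to write the functional $a_2a_4-a_3^2$ in terms of the coefficients $p_1,p_2,p_3$ of the associated function $\phi\in\mathcal{P}$, and then to reduce its estimation to a two-variable optimization on a rectangle. First I would substitute the expressions $(\ref{p24})$, $(\ref{p25})$ and $(\ref{p26})$ for $a_2$, $a_3$ and $a_4$ into $a_2a_4-a_3^2$; after collecting terms this yields a linear combination of the monomials $p_1p_3$, $p_1^2p_2$, $p_2^2$ and $p_1^4$ whose coefficients are built from the Pochhammer ratios $(c)_2/(a)_2(b)_2$ and $(c)_3/(a)_3(b)_3$. Since the class $\mathcal{P}$ is invariant under the rotation $\phi(z)\mapsto\phi(e^{i\theta}z)$, I may normalize $p_1=p$ to be real with $0\le p\le 2$.

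Next I would eliminate $p_2$ and $p_3$ by means of the representations $(\ref{p13})$ and $(\ref{p14})$, which express them through $p$ and two auxiliary parameters $x,z$ with $|x|\le 1$ and $|z|\le 1$. Using $|z|\le 1$ together with the triangle inequality then produces an upper bound $G(p,y)$ for $|a_2a_4-a_3^2|$, where $y:=|x|\in[0,1]$; the problem becomes that of maximizing $G$ over the closed rectangle $[0,2]\times[0,1]$.

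The decisive step is the boundary analysis. I would compute $\partial G/\partial y$ and verify that, under the hypothesis $a\ge c\ge 1/2$, each coefficient occurring in it is nonnegative, so that $\partial G/\partial y>0$; this forces the maximum onto the edge $y=1$, and I set $F(p):=G(p,1)$. Differentiating $F$, a factor $p$ comes out, leaving a factor quadratic in $p$, so $F'(p)=0$ yields only $p=0$ together with one explicit value of $p^2$; a check that $F''(0)<0$ then shows that $F$ attains its maximum at the endpoint $p=0$. Evaluating at $(p,y)=(0,1)$ gives $|a_2a_4-a_3^2|\le\{(c)_2/(a)_2(b)_2\}^2$. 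Sharpness follows from the displayed function $g$, which corresponds to $\phi(z)=(1+z^2)/(1-z^2)$, i.e. $p_1=0$ and $|p_2|=2$, precisely the corner $(0,1)$.

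I expect the main obstacle to be twofold and essentially computational. First, the algebra in passing from the raw expansion of $a_2a_4-a_3^2$ to the bound $G(p,y)$ is heavy: the Pochhammer symbols must be expanded and the many coefficients regrouped without error. Second, and more to the point, the hypothesis $a\ge c\ge 1/2$ earns its keep only through the two positivity checks $\partial G/\partial y>0$ and $F''(0)<0$; confirming the signs of those coefficient polynomials in $a$, $b$, $c$ is the delicate part, since it is exactly what pins the extremum to the corner $(0,1)$ and produces the clean bound.
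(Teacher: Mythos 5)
Your proposal follows essentially the same route as the paper's own proof: the same substitution of $(\ref{p24})$--$(\ref{p26})$, the same Libera--Zlotkiewicz representations $(\ref{p13})$ and $(\ref{p14})$ with the normalization $p_{1}=p\in[0,2]$, the same triangle-inequality bound $G(p,y)$ on $[0,2]\times[0,1]$, and the same corner analysis via $\partial G/\partial y>0$, $F(p)=G(p,1)$, and $F''(0)<0$ pinning the maximum at $(p,y)=(0,1)$, with the identical extremal function corresponding to $\phi(z)=(1+z^{2})/(1-z^{2})$. Your explicit appeal to rotation invariance of $\mathcal{P}$ to justify taking $p_{1}$ real is a slightly more careful phrasing of a step the paper leaves implicit, but the argument is otherwise the paper's proof.
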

\begin{remark}\label{k9}
    Putting $b=1$ in the Theorem $\ref{k8},$ we obtained the recent result, due to Patel and Sahoo (cf. \cite{22a}, Theorem 7).
\end{remark}
Next we find the sharp upper bound for the fourth co-efficient of functions belonging to the class $\mathcal{R}_{a,b}^{c}.$
\begin{theorem}\label{k10}
    If the function $f$ given by $(\ref{p1})$ belongs to the class $\mathcal{R}_{a,b}^{c}$ then
    \begin{equation}\label{p41}
        \mid a_{4} \mid\leq \frac{3(c)_{3}}{•(a)_{3}(b)_{3}}.
    \end{equation}
    The estimate (\ref{p41}) is sharp.
    \begin{proof}
        Using $(\ref{p14})$ in $(\ref{p26}),$ we assume that $p_{1}>0$ and write $p_{1}= p~~ (0\leq p\leq2),$ then we deduce that
        \begin{multline}\label{p42}
            \mid a_{4}\mid=\left|\frac{6(c)_{3}}{•(a)_{3}(b)_{3}}\left(\frac{p^{3}}{128•}-\frac{(4-p^{2})px}{•32}-\frac{1}{•16}(4-p^{2})px^{2}+\frac{1}{•8}(4-p^{2})(1-\mid x \mid^{2})z\right) \right|,
        \end{multline}
        for some $x~~~~ (\mid x \mid \leq1)$ and for some $z~~~~ (\mid z \mid \leq1).$ Applying the triangle inequality  and replace $\mid x \mid$ by $y$ in (\ref{p42}), we get
        \begin{multline}\label{p43}
            \mid a_{4} \mid \leq \frac{6(c)_{3}}{•(a)_{3}(b)_{3}}\left(\frac{p^{3}}{128•}+\frac{(4-p^{2})py}{•32}+\frac{1}{•16}(4-p^{2})py^{2}+\frac{1}{•8}(4-p^{2})(1-\mid y \mid^{2})z\right)\\= G(p,y),~~~~~~~~~~~~~(0 \leq y \leq 1; 0 \leq  p \leq 2)~~~~(say).
        \end{multline}
        We next maximize the function $ G(p,y)$ on the closed rectangle $[0,2]\times [0,1].$ Since
        \begin{eqnarray*}\label{p44}
            G^{\prime}(y)=\frac{6(c)_{3}}{•(a)_{3}(b)_{3}}\left(\frac{(4-p^{2})p}{•32}+\frac{1}{8}(4-p^{2})py-\frac{1}{4}(4-p^{2})y\right)<0,
        \end{eqnarray*}
        for $0<p<2$ and $0<y <1$; it follows that $G(p,y)$ can't have a maximum  value in the interior of the closed rectangle $[0,2]\times [0,1].$ Thus, for fixed $p \in [0,2],$
        \begin{eqnarray*}\label{p45}
            \max_{0\leq y\leq1} G(p,y)=G(p,0)=F(p),
        \end{eqnarray*}
        where
        \begin{eqnarray*}\label{p46}
            F(p)=\frac{6(c)_{3}}{•(a)_{3}(b)_{3}}\left(\frac{p^{3}}{•128}+\frac{1}{•2}-\frac{p^{2}}{•8}\right).
        \end{eqnarray*}
        We further note that,
        \begin{eqnarray}\label{p47}
            F^{\prime}(p)=\frac{6(c)_{3}}{•(a)_{3}(b)_{3}}\left(\frac{3p}{•128}-\frac{1}{•4}\right)p,
        \end{eqnarray}
        for $p=0$ or $p=32/•3.$ Since
        $$F^{\prime\prime}(0)=\frac{-3(c)_{3}}{•2(a)_{3}(b)_{3}}<0,$$
        the function $F$ attains maximum value at $p=0.$ Thus, the upper bound of the function $G$  corresponding  to $p=y=0.$ Therefore,  putting $p=y=0$ in $(\ref{p43}),$ we get
        \begin{equation}\label{p48}
            \mid a_{4} \mid\leq \frac{3(c)_{3}}{•(a)_{3}(b)_{3}}.
        \end{equation}
        The estimate in $(\ref{p48})$ is sharp for the function $f$ defined by,
        $$f(z)=z _{1}F_{2}(c,b,a;z)* 36z\sqrt{ 1+z^{3}}.$$
        This completes the proof of Theorem \ref{k10}.
    \end{proof}
    \begin{remark}\label{neha}

    Putting $b=1$ in the Theorem \ref{k10}, we obtained the recent result due to Patel and Sahoo (cf. \cite{22a}, Theorem 9).
    \end{remark}
\end{theorem}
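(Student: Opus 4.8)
The plan is to start from the coefficient representation (\ref{p26}) for $a_4$, obtained in the course of proving Theorem \ref{k1}, and to reduce the estimate to a real two-variable extremal problem by means of the Libera--Zlotkiewicz parametrization. First I would substitute the formulas (\ref{p13}) and (\ref{p14}) for $p_2$ and $p_3$ into (\ref{p26}), so that $a_4$ is expressed entirely in terms of $p_1$ and the auxiliary complex numbers $x,z$ with $|x|\le 1$ and $|z|\le 1$. Because the class $\mathcal R_{a,b}^c$ is rotation invariant, I may assume without loss of generality that $p_1=p\in[0,2]$. After collecting terms this puts $|a_4|$ in the form
\[
\frac{6(c)_3}{(a)_3(b)_3}\Bigl|\tfrac{1}{128}p^3-\tfrac{1}{32}(4-p^2)px-\tfrac{1}{16}(4-p^2)px^2+\tfrac{1}{8}(4-p^2)(1-|x|^2)z\Bigr|.
\]

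Next I would apply the triangle inequality together with $|z|\le 1$, and set $|x|=y\in[0,1]$, to obtain a real-valued majorant $G(p,y)$ on the closed rectangle $[0,2]\times[0,1]$. The heart of the argument is the maximization of $G$ there. I would first examine $\partial G/\partial y$, whose sign decides whether the maximum in the $y$-direction is attained at the vertex of the parabola $y\mapsto G(p,y)$ or on an edge; having located it, $G$ collapses to a single-variable function $F(p)$. Computing $F'(p)$ shows that its only interior critical point in the relevant range solves $3p/128=1/4$, i.e.\ $p=32/3$, which lies outside $[0,2]$ and is therefore irrelevant, so the extremum is governed by $p=0$, where $F''(0)<0$ confirms a maximum. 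This yields $|a_4|\le \frac{6(c)_3}{(a)_3(b)_3}\cdot\frac12=\frac{3(c)_3}{(a)_3(b)_3}$.

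For sharpness I would exhibit a function attaining the bound. Choosing the Schwarz function $w(z)=z^3$ gives $\frac{\mathcal I_{a,b}^c f(z)}{z}=\sqrt{1+z^3}\prec\sqrt{1+z}$, so that $\mathcal I_{a,b}^c f(z)=z\sqrt{1+z^3}=z+\tfrac12 z^4+\cdots$. Comparing the coefficient of $z^4$ with the series expansion of $\mathcal I_{a,b}^c f$ forces $\frac{(a)_3(b)_3}{6(c)_3}a_4=\tfrac12$, i.e.\ $a_4=\frac{3(c)_3}{(a)_3(b)_3}$; inverting the Hohlov operator by convolution then expresses the extremal $f$ in the stated hypergeometric--convolution form.

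The step I expect to be the main obstacle is the two-variable optimization. The sign of $\partial G/\partial y$ is not constant on the whole rectangle: the coefficient of $y^2$ in $G$ equals $(4-p^2)(p-2)/16$, which is negative for $p<2$, so $G$ is concave in $y$ with vertex $y^{\ast}=\frac{p}{4(2-p)}$, and this vertex lies inside $[0,1]$ precisely when $p\le 8/5$. One must therefore argue carefully that the global maximum still occurs at $(p,y)=(0,0)$ rather than at an interior vertex. A clean way to close this gap is to exploit the concavity in $y$: for $p\le 8/5$ the maximum over $y$ is attained at $y^{\ast}$ and simplifies to a positive multiple of $3p^3-30p^2+128$, while for $p>8/5$ it is attained at $y=1$; in either regime the value stays strictly below $\tfrac12\cdot\frac{6(c)_3}{(a)_3(b)_3}$ except at $p=0$, which secures the claimed bound.
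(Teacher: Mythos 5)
Your proposal is correct, and it follows the same overall route as the paper: substitute (\ref{p13})--(\ref{p14}) into (\ref{p26}), normalize $p_{1}=p\in[0,2]$, apply the triangle inequality, and maximize the resulting majorant $G(p,y)$ on $[0,2]\times[0,1]$; your expression for $G$, the irrelevant critical point $p=32/3$, and the extremal function all agree with the paper's. The substantive difference is precisely the step you flagged as the main obstacle: the paper asserts that $\partial G/\partial y<0$ on $(0,2)\times(0,1)$ and concludes $\max_{0\leq y\leq 1}G(p,y)=G(p,0)$, but this claim is false. Indeed
\[
\frac{\partial G}{\partial y}=\frac{6(c)_{3}(4-p^{2})}{32\,(a)_{3}(b)_{3}}\bigl(p(1+4y)-8y\bigr),
\]
which is positive whenever $p>8y/(1+4y)$ (for instance at $p=1$, $y=1/10$), so $G$ is in fact increasing in $y$ near $y=0$ for every $p\in(0,2)$. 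Your concavity analysis is the correct completion of the argument: the $y^{2}$-coefficient of $G$ is proportional to $(4-p^{2})(p-2)/16\leq 0$, the vertex $y^{\ast}=p/(4(2-p))$ lies in $[0,1]$ exactly when $p\leq 8/5$, and there $G(p,y^{\ast})=\frac{6(c)_{3}}{(a)_{3}(b)_{3}}\cdot\frac{3p^{3}-30p^{2}+128}{256}\leq\frac{3(c)_{3}}{(a)_{3}(b)_{3}}$ because $3p^{3}-30p^{2}=3p^{2}(p-10)\leq 0$ on $[0,2]$, with equality only at $p=0$; for $p\in(8/5,2]$ one has $G(p,1)=\frac{6(c)_{3}}{(a)_{3}(b)_{3}}\left(\frac{p^{3}}{128}+\frac{3p(4-p^{2})}{32}\right)$, whose $p$-derivative is proportional to $48-33p^{2}<0$ there, so its value is at most $\frac{6(c)_{3}}{(a)_{3}(b)_{3}}\cdot\frac{31}{125}<\frac{3(c)_{3}}{(a)_{3}(b)_{3}}$. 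Thus the theorem's bound survives only because the corrected maximum still occurs at $p=0$: your version supplies a complete proof where the paper's, as written, has a genuine gap (the paper's (\ref{p43}) also retains a stray factor $z$ after the triangle inequality, a typo you silently correct). Your sharpness verification via the Schwarz function $w(z)=z^{3}$, giving $\mathcal{I}_{a,b}^{c}f(z)=z\sqrt{1+z^{3}}=z+\frac{1}{2}z^{4}+\cdots$ and hence $a_{4}=3(c)_{3}/((a)_{3}(b)_{3})$, is the same extremal as the paper's $z\,{}_{1}F_{2}(c,b,a;z)\ast 36z\sqrt{1+z^{3}}$, up to the paper's coefficient-normalizing factor in its formal convolution notation.
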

\begin{theorem}\label{a3}
If the function $f$ given by $(\ref{p1})$ belongs to the class $\mathcal{R}_{a,b}^{c},$ then
 \begin{equation*}
        |a_{5}|\leq \frac{15}{16}\frac{(c)_{4}}{(a)_{4} (b)_{4}} .
    \end{equation*}
\end{theorem}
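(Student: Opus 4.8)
The plan is to follow the same route as in the proofs of Theorems~\ref{k8} and~\ref{k10}. Using the representation $(\ref{p21})$ for a function $f\in\mathcal R_{a,b}^{c}$, I would expand the right-hand side of $(\ref{p21})$ as a power series up to order $z^{5}$ and compare the coefficients of $z^{5}$ on both sides, just as $(\ref{p24})$--$(\ref{p26})$ were obtained. Carrying out this computation yields a relation of the form
\begin{equation*}
a_{5}=\frac{24(c)_{4}}{(a)_{4}(b)_{4}}\left(\frac{1}{4}p_{4}-\frac{5}{16}p_{1}p_{3}-\frac{5}{32}p_{2}^{2}+\frac{39}{128}p_{1}^{2}p_{2}-\frac{141}{2048}p_{1}^{4}\right),
\end{equation*}
where $p_{1},p_{2},p_{3},p_{4}$ are the Taylor coefficients of the associated Carath\'eodory function $\phi\in\mathcal P$. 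Equivalently, since $(\mathcal I_{a,b}^{c}f)(z)/z=\sqrt{1+w(z)}\prec\sqrt{1+z}$, estimating $|a_{5}|$ amounts to bounding $\frac{24(c)_{4}}{(a)_{4}(b)_{4}}$ times the modulus of the fourth Taylor coefficient of $\sqrt{1+w}$; thus the whole problem reduces to a sharp fourth-coefficient bound for functions subordinate to $\sqrt{1+z}$.

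I would then apply the Libera--Zlotkiewicz device exactly as before. Normalising $p_{1}=p\in[0,2]$, I substitute the parametrization $(\ref{p13})$ for $p_{2}$ and $(\ref{p14})$ for $p_{3}$ in terms of $p$ and the auxiliary parameters $x,z$ with $|x|\le1$ and $|z|\le1$, and control $p_{4}$ through the Carath\'eodory estimate $|p_{4}|\le2$. After applying the triangle inequality and writing $y=|x|$, the bound is reduced to maximizing a real function $G(p,y)$ over the rectangle $[0,2]\times[0,1]$. As in Theorems~\ref{k8} and~\ref{k10}, I would first inspect $\partial G/\partial y$ to force the maximum onto the boundary, set $F(p)=\max_{0\le y\le1}G(p,y)$, locate the critical points of $F$, and use the sign of $F''$ to single out the maximizing value of $p$ and hence the bound on $|a_{5}|$.

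The step I expect to be the main obstacle is precisely the treatment of $p_{4}$. Lemma~\ref{b1} and the relations $(\ref{p13})$--$(\ref{p14})$ parametrize only $p_{2}$ and $p_{3}$, so $p_{4}$ can be handled solely through $|p_{4}|\le2$; since $p_{4}$ enters $a_{5}$ linearly with the comparatively large weight $1/4$, the naive reduction to $[0,2]\times[0,1]$ is much looser than in the $a_{4}$ case, and the genuine difficulty is to determine the \emph{global} extremal configuration rather than an arbitrary boundary point. It is this comparison that fixes the sharp constant. Guided by the extremal functions already found for $a_{3}$ and $a_{4}$, where $(\mathcal I_{a,b}^{c}f)(z)/z$ equals $\sqrt{1+z^{2}}$ and $\sqrt{1+z^{3}}$ respectively, I expect the extremum for $a_{5}$ to be attained at $(\mathcal I_{a,b}^{c}f)(z)/z=\sqrt{1+z^{4}}$, that is at $p_{1}=p_{2}=p_{3}=0$ and $p_{4}=2$; establishing that this configuration dominates all other admissible ones is the crux of the argument. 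The corresponding sharpness function would then have the form $f(z)=z\,{}_{1}F_{2}(c,b,a;z)*\kappa\,z\sqrt{1+z^{4}}$ for a suitable normalising constant $\kappa$, in line with the extremal functions exhibited earlier in the paper.
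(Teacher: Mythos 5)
Your coefficient identity for $a_5$ is correct: expanding $(\ref{p21})$ to order $z^4$ does give $a_{5}=\frac{24(c)_{4}}{(a)_{4}(b)_{4}}\left(\frac{1}{4}p_{4}-\frac{5}{16}p_{1}p_{3}-\frac{5}{32}p_{2}^{2}+\frac{39}{128}p_{1}^{2}p_{2}-\frac{141}{2048}p_{1}^{4}\right)$. But you missed the sanity check this identity forces, and it destroys the plan: at your own proposed extremal configuration $p_1=p_2=p_3=0$, $p_4=2$ (i.e.\ $\phi(z)=(1+z^{4})/(1-z^{4})$, $w(z)=z^{4}$, $\mathcal{I}_{a,b}^{c}f(z)/z=\sqrt{1+z^{4}}$, which is a perfectly admissible member of the class), the identity yields $|a_5|=\frac{12(c)_4}{(a)_4(b)_4}$ --- larger than the bound $\frac{15}{16}\frac{(c)_4}{(a)_4(b)_4}$ you set out to prove by a factor of $12.8$. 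So no execution of your scheme (or any scheme) can terminate at the stated constant: the statement is false as printed. The number $\frac{15}{16}=24\cdot\frac{5}{128}$ is exactly what one obtains by evaluating $|a_5|$ at $w(z)=z$, i.e.\ by treating the majorant $\sqrt{1+z}$ itself (whose fourth coefficient is $-\frac{5}{128}$) as extremal; but coefficients of subordinate functions are not dominated by those of the majorant beyond the first. Consistency with the paper's own bounds $|a_3|\leq\frac{(c)_2}{(a)_2(b)_2}$ and $|a_4|\leq\frac{3(c)_3}{(a)_3(b)_3}$ (Corollary \ref{k6}, Theorem \ref{k10}), each of which equals $\frac12$ after the normalization $\frac{(a)_n(b)_n}{(c)_n\,n!}$ --- that is, Rogosinski's bound for functions subordinate to the convex function $\sqrt{1+z}$ --- shows the correct sharp bound is $|a_5|\leq\frac{12(c)_4}{(a)_4(b)_4}$, attained precisely at your candidate $\sqrt{1+z^{4}}$.

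There is also a mechanical flaw in the proposed reduction even granting a corrected target constant: you control $p_4$ only through $|p_4|\leq 2$ while parametrizing $p_2,p_3$ via $(\ref{p13})$--$(\ref{p14})$, thereby decoupling $p_4$ from $p_1,x,z$. In fact $p_4$ is constrained by the earlier coefficients (for instance, $p_1=0$ and $|x|=1$ force $p_4=2x^{2}$, at which point the $p_4$-term and the $p_2^2$-term partially cancel rather than add), so your triangle-inequality function $G(p,y)$ tops out near $\frac{27(c)_4}{(a)_4(b)_4}$ at $p=0$, $y=1$, overshooting even the true value $12$; the Libera--Zlotkiewicz rectangle argument as described can therefore never be sharp for $a_5$ without a fourth-coefficient analogue of $(\ref{p13})$--$(\ref{p14})$, whereas Rogosinski's theorem settles the problem in one line. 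For comparison: the paper offers no proof of Theorem \ref{a3} at all --- it is stated bare --- so the discrepancy to catch was internal to your own write-up, between the constant you were asked to prove and the value of your (correct) formula at your (correct) extremal.
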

Putting $b=1$ in the Theorem \ref{a3}, we get following:
\begin{corollary}
If the function $f$ given by $(\ref{p1})$ belongs to the class $\mathcal{R}{(a,b)},$ then
\begin{equation*}
        |a_{5}|\leq \frac{15}{384}\frac{(c)_{4}}{(a)_{4}} .
    \end{equation*}
\end{corollary}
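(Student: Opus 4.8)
The plan is to push the coefficient comparison that produced $(\ref{p24})$--$(\ref{p26})$ one order further. Expanding the right-hand side of $(\ref{p21})$ as a power series and matching the coefficient of $z^{4}$ on both sides, I would record $a_{5}$ as a polynomial in $p_{1},p_{2},p_{3},p_{4}$. Carrying out the $\sqrt{1+w}$ expansion with $w=(\phi-1)/(\phi+1)$ yields
\begin{equation*}
a_{5}=\frac{24(c)_{4}}{(a)_{4}(b)_{4}}\left(\frac{1}{4}p_{4}-\frac{5}{16}p_{1}p_{3}-\frac{5}{32}p_{2}^{2}+\frac{39}{128}p_{1}^{2}p_{2}-\frac{141}{2048}p_{1}^{4}\right),
\end{equation*}
the natural successor of the formulas for $a_{2},a_{3},a_{4}$. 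A useful consistency check is that the parenthesised expression collapses to the fourth Taylor coefficient $-5/128$ of $\sqrt{1+z}$ when $\phi(z)=(1+z)/(1-z)$, which is exactly the configuration that will produce the claimed constant $15/16$.

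Next, mirroring the Libera--Zlotkiewicz technique already used in Theorems \ref{k8} and \ref{k10}, I would normalise $p_{1}=p\in[0,2]$ (legitimate because the class is invariant under rotations), substitute the representations $(\ref{p13})$ and $(\ref{p14})$ for $p_{2}$ and $p_{3}$ together with the corresponding Libera--Zlotkiewicz expression for $p_{4}$, and then apply the triangle inequality. This reduces the problem to maximising a real-valued function $G(p,y)$ over the rectangle $[0,2]\times[0,1]$, where $y=|x|$ is the modulus parameter, in complete analogy with $(\ref{p36})$ and $(\ref{p43})$.

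The last step is the two-variable maximisation. I would first examine the sign of $\partial G/\partial y$ to force the maximum onto the boundary $y\in\{0,1\}$, thereby reducing matters to a single-variable function $F(p)=\max_{y}G(p,y)$; I would then locate the critical points of $F$ on $[0,2]$, apply the second-derivative test at $p=0$, and read off the bound $\frac{15}{16}\frac{(c)_{4}}{(a)_{4}(b)_{4}}$ from the extremal configuration, together with its extremal function written via Gauss hypergeometric convolution as in the earlier theorems.

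I expect this maximisation to be the main obstacle, and a genuinely delicate one. There are two layers of difficulty. First, the lemma supplying $(\ref{p13})$--$(\ref{p14})$ gives parametric representations only through $p_{3}$, so one must either derive the analogous representation for $p_{4}$ or argue around it. Second, and more seriously, a crude bound based on $|p_{k}|\le2$ is hopeless here, since the single term $\frac{1}{4}p_{4}$ already reaches $\frac{1}{2}$; the stated bound therefore hinges entirely on the cancellation among the five terms. One must accordingly \emph{prove}, rather than assume, that no competing configuration---in particular $\phi(z)=(1+z^{4})/(1-z^{4})$, which isolates $p_{4}$---exceeds the value furnished by $\phi(z)=(1+z)/(1-z)$. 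Pinning down exactly where the extremum of this functional lies is the crux on which the entire estimate rests.
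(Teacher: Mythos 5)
Your route is in any case not the paper's: the paper proves this corollary in one line, by putting $b=1$ in Theorem \ref{a3} so that $(b)_4=(1)_4=24$ and $\frac{15}{16}\cdot\frac{1}{24}=\frac{15}{384}$ (Theorem \ref{a3} itself is stated there with no proof at all; also note the class in the corollary should read $\mathcal{R}(a,c)$). You instead set out to prove the underlying $|a_5|$ bound from scratch, and your preparatory work is correct: matching the coefficient of $z^4$ in (\ref{p21}) does give $a_5=\frac{24(c)_4}{(a)_4(b)_4}\left(\frac{1}{4}p_4-\frac{5}{16}p_1p_3-\frac{5}{32}p_2^2+\frac{39}{128}p_1^2p_2-\frac{141}{2048}p_1^4\right)$, and your consistency check (all $p_k=2$ collapses the bracket to $-5/128$, the fourth coefficient of $\sqrt{1+z}$) is right.

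But the proposal stops exactly where the proof would have to happen, and the deferred step does not merely resist completion --- it fails, at precisely the configuration you flagged. Take $\phi(z)=(1+z^4)/(1-z^4)$, i.e.\ the Schwarz function $w(z)=z^4$, so $p_1=p_2=p_3=0$ and $p_4=2$: your formula gives $|a_5|=\frac{24(c)_4}{(a)_4(b)_4}\cdot\frac{1}{2}=\frac{12(c)_4}{(a)_4(b)_4}$, exceeding the claimed $\frac{15}{16}\frac{(c)_4}{(a)_4(b)_4}$; for $b=1$ it gives $|a_5|=\frac{(c)_4}{2(a)_4}$, far above the corollary's $\frac{15}{384}\frac{(c)_4}{(a)_4}=\frac{5}{128}\frac{(c)_4}{(a)_4}$. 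The corresponding $f\in\mathcal{A}$ exists (define its coefficients through $\mathcal{I}_{a,b}^{c}f(z)/z=\sqrt{1+z^4}$), so the cancellation you hoped to establish simply does not occur: no maximisation over $(p,y)$, and no representation of $p_4$ supplementing (\ref{p13})--(\ref{p14}), can recover the stated constant, because the statement --- and Theorem \ref{a3}, from which the paper derives it --- is false as printed. The constant $\frac{15}{16}=24\cdot\frac{5}{128}$ is the value at the configuration $w(z)=z$ only; the true sharp bound is $\frac{12(c)_4}{(a)_4(b)_4}$, attained at $w(z)=z^4$, consistent both with Rogosinski's coefficient theorem for subordination to the convex function $\sqrt{1+z}$ (every coefficient of $\mathcal{I}_{a,b}^{c}f(z)/z-1$ has modulus at most $\frac{1}{2}$) and with the pattern of Theorem \ref{k10}, whose extremum $|a_4|=\frac{6(c)_3}{(a)_3(b)_3}\cdot\frac{1}{2}$ comes from $\sqrt{1+z^3}$. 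So you located the crux exactly --- the $p_4$-isolating competitor --- but a completed version of your argument refutes the corollary rather than proving it (and the error propagates into the $|H_3(1)|$ bound of Theorem \ref{a4}, which uses Theorem \ref{a3}).
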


The following Theorems are straight forward verification on applying the same procedure as described in Theorem \ref{k8}.
\begin{theorem}\label{a1}
If the function $f$ given by $(\ref{p1})$ belongs to the class $\mathcal{R}_{a,b}^{c},$ then
 \begin{equation*}
        |a_{2}a_{3}-a_{4}|\leq \frac{13(c)_{2}}{64 a (a)_{3} b (b)_{3}}
        \left(\frac{c(a+2)(b+2)+9ab(c+2)}{c(a+2)(b+2)+11ab(c+2)}\right)^{\frac{1}{2}}(c(a+2)(b+2)+9ab(c+2)).
    \end{equation*}
\end{theorem}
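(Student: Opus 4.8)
The plan is to follow verbatim the machinery used in the proof of Theorem~\ref{k8}, since the functional $a_2a_3-a_4$ is exactly one of the three pieces appearing in $(\ref{anuja})$. First I would substitute the coefficient formulas $(\ref{p24})$, $(\ref{p25})$ and $(\ref{p26})$ into $a_2a_3-a_4$ and collect the result as a linear combination of $p_1^3$, $p_1p_2$ and $p_3$, the coefficients being the Pochhammer ratios $\tfrac{c}{4ab}$, $\tfrac{2(c)_2}{(a)_2(b)_2}$ and $\tfrac{6(c)_3}{(a)_3(b)_3}$. Since $\phi\in\mathcal P$ I may assume after a rotation that $p_1=p\in[0,2]$, and then insert the Libera--Zlotkiewicz representations $(\ref{p13})$ for $p_2$ and $(\ref{p14})$ for $p_3$. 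Taking moduli and writing $|x|=y$ yields a bound $|a_2a_3-a_4|\le G(p,y)$ on the rectangle $[0,2]\times[0,1]$, exactly in the spirit of $(\ref{p36})$.

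Next I would maximise $G$ over the rectangle. Computing $\partial G/\partial y$, I expect, as in Theorem~\ref{k8}, that it keeps a fixed sign in the interior, so the maximum in $y$ is attained on an edge; the structure of the final bound forces this edge to be $y=1$, where the terms carrying the factor $(4-p^2)p$ from $(\ref{p14})$ survive while the $p$-free term $(4-p^2)(1-|x|^2)z$ drops out. The decisive feature is then that $F(p):=G(p,1)$ reduces to an odd cubic, $F(p)=C_3p^3+C_1p$ with $C_3<0<C_1$, whose maximiser lies in the \emph{interior} of $[0,2]$; this is precisely what produces the square root in the stated estimate, in contrast to Theorems~\ref{k8} and \ref{k10}, where the maximum sat at $p=0$. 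Solving $F'(p)=0$ gives $(p^\ast)^2=C_1/(3|C_3|)$, and the second-derivative test confirms that this interior root, not an endpoint, is the maximiser.

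Evaluating at $p^\ast$ gives $F(p^\ast)=\tfrac{2}{3\sqrt3}\,C_1^{3/2}/|C_3|^{1/2}$, so the whole task reduces to tracking the rational coefficients $C_1$ and $C_3$ in $a,b,c$ and recognising that $C_1$ is proportional to $P:=c(a+2)(b+2)+9ab(c+2)$ and $|C_3|$ proportional to $Q:=c(a+2)(b+2)+11ab(c+2)$; the combination $C_1^{3/2}/|C_3|^{1/2}$ then collapses to $P^{3/2}/Q^{1/2}=P\sqrt{P/Q}$, and after absorbing the numerical and Pochhammer prefactors into $\tfrac{13(c)_2}{64\,a(a)_3b(b)_3}$ one recovers the claimed bound. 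The main obstacle is thus organisational rather than conceptual: carrying the cubic coefficients through the substitution and differentiation without slip, and verifying that $p^\ast\in[0,2]$ under the standing hypothesis $a,b\ge c>0$, so that the interior critical point is genuinely admissible.
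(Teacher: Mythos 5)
Your outline is faithful to what the paper actually does---the paper offers no detailed proof of Theorem~\ref{a1}, saying only that it is a ``straight forward verification on applying the same procedure as described in Theorem \ref{k8}''---and most of your skeleton checks out. Writing $K_1=\frac{c(c)_2}{ab(a)_2(b)_2}$ and $K_2=\frac{(c)_3}{(a)_3(b)_3}$, substitution of $(\ref{p24})$--$(\ref{p26})$ and then $(\ref{p13})$--$(\ref{p14})$ gives the majorant
\begin{equation*}
G(p,y)=\frac{K_1+3K_2}{64}p^3+\frac{K_1+3K_2}{16}p(4-p^2)y+\frac{3K_2}{8}p(4-p^2)y^2+\frac{3K_2}{4}(4-p^2)(1-y^2),
\end{equation*}
and on $y=1$ this is indeed your odd cubic, with $C_1=\frac{K_1+9K_2}{4}$ and $|C_3|=\frac{3(K_1+11K_2)}{64}$, where $K_1+9K_2$ and $K_1+11K_2$ are proportional to $P=c(a+2)(b+2)+9ab(c+2)$ and $Q=c(a+2)(b+2)+11ab(c+2)$ exactly as you predicted. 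But your step ``$\partial G/\partial y$ keeps a fixed sign, so the maximum sits on $y=1$'' fails. Here $\partial G/\partial y=(4-p^2)\bigl[\frac{K_1+3K_2}{16}p-\frac{3K_2}{4}(2-p)y\bigr]$: unlike Theorem~\ref{k8}, where $p_3$ enters only through the product $p_1p_3$, in $a_2a_3-a_4$ the term $p_3$ appears bare, so the $z$-term $\frac{3K_2}{4}(4-p^2)(1-y^2)$ survives at $p=0$; $G$ is strictly concave in $y$, $\partial G/\partial y<0$ near $p=0$, and the corner $(p,y)=(0,0)$ contributes $G(0,0)=3K_2=\frac{3(c)_3}{(a)_3(b)_3}$---the Theorem~\ref{k10} scenario, which can dominate your interior critical value $F(p^\ast)$.

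This is not a repairable bookkeeping slip, because the stated bound is false. First, even granting $y=1$, your own formula $F(p^\ast)=\frac{2}{3\sqrt3}\,C_1^{3/2}/|C_3|^{1/2}$ evaluates to $\frac{2}{9}\cdot\frac{(c)_2}{a(a)_3b(b)_3}\,P\sqrt{P/Q}$, i.e.\ numerical constant $\frac29\approx0.222$, not $\frac{13}{64}\approx0.203$; since $\frac{13}{64}<\frac29$, the theorem's bound lies strictly below anything this majorant can yield. Second, and decisively: take $a=b=c=1$ (admissible, as $a,b\geq c>0$) and $f(z)=z\sqrt{1+z^3}$, which belongs to $\mathcal{R}_{1,1}^{1}$ because $\sqrt{1+z^3}=\sqrt{1+\omega(z)}$ with the Schwarz function $\omega(z)=z^3$. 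Then $a_2=a_3=0$ and $a_4=\frac12$, so $|a_2a_3-a_4|=\frac12$, whereas Theorem~\ref{a1} claims the bound $\frac{13}{32}\sqrt{6/7}\approx0.376$. (This $f$ is precisely the paper's own extremal function for the sharp estimate $|a_4|\leq\frac{3(c)_3}{(a)_3(b)_3}$ in Theorem~\ref{k10}, so Theorem~\ref{a1} even contradicts Theorem~\ref{k10}.) The honest output of the method is $\max_{[0,2]\times[0,1]}G$, which is at least $\max\left\{\frac{3(c)_3}{(a)_3(b)_3},\ \frac29\,\frac{(c)_2\,P}{a(a)_3b(b)_3}\sqrt{P/Q}\right\}$ and at $a=b=c=1$ equals $\frac12$, attained. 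So your plan reproduces the paper's procedure accurately, but the monotonicity claim must be replaced by a genuine two-variable maximization (including the $y=0$ corner and the interior ridge in $y$), and the target inequality itself must be corrected before any proof can succeed.
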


Putting $b=1$ in the Theorem \ref{a1}, we get following result for the function class $\mathcal{R}(a,c).$
\begin{corollary}
	If the function $f$ given by $(\ref{p1})$ belongs to the class $\mathcal{R}(a,c),$ then
\begin{equation*}
        |a_{2}a_{3}-a_{4}|\leq \frac{13(c)_{2}}{384 a (a)_{3}}
        \left(\frac{3c(a+2)+9a(c+2)}{3c(a+2)+11a(c+2)}\right)^{\frac{1}{2}}(3c(a+2)+9a(c+2)).
    \end{equation*}
\end{corollary}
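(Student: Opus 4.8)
The plan is to imitate the argument of Theorem~\ref{k8}, since $a_2a_3-a_4$ involves only the first three coefficients and hence, through (\ref{p24}), (\ref{p25}) and (\ref{p26}), only $p_1,p_2,p_3$. First I would substitute those three expressions into the functional and collect the result as a linear combination of $p_1^3$, $p_1p_2$ and $p_3$, the coefficients being assembled from the Pochhammer ratios $c/(ab)$, $(c)_2/((a)_2(b)_2)$ and $(c)_3/((a)_3(b)_3)$. Since $\phi\in\mathcal P$, I may take $p_1=p\in[0,2]$ and insert the Libera--Zlotkiewicz representations (\ref{p13}) and (\ref{p14}) of Lemma~\ref{k1} for $p_2$ and $p_3$, so that the functional is expressed through $p$ and the auxiliary parameters $x,z$ with $|x|\le1$ and $|z|\le1$.

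Applying the triangle inequality and writing $|x|=y$, $|z|\le1$ yields a real majorant $G(p,y)$ on the rectangle $[0,2]\times[0,1]$; its $p^3$ coefficient carries the factor $13/128$ inherited from (\ref{p26}), which is the source of the constant $13$ in the stated bound. I would then maximise $G$ over $y$ for each fixed $p$. A sign analysis of $\partial G/\partial y$, carried out exactly as in Theorem~\ref{k8}, confines the $y$-direction to the edge $y=1$, on which the $z$-contribution (proportional to $(4-p^2)(1-y^2)$) vanishes; thus
\begin{equation*}
F(p):=G(p,1)=\alpha\,p^3+\beta\,(4-p^2)p=(\alpha-\beta)p^3+4\beta p,
\end{equation*}
an odd-type cubic with $\alpha,\beta>0$ built from the Pochhammer data and $\alpha<\beta$.

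The decisive difference from Theorem~\ref{k8} is that $F$ attains its maximum not at an endpoint but at an interior critical point. Writing $N:=c(a+2)(b+2)+9ab(c+2)$ and $D:=c(a+2)(b+2)+11ab(c+2)$, solving $F'(p)=3(\alpha-\beta)p^2+4\beta=0$ produces $p^\ast\in(0,2)$ with $(p^\ast)^2$ a fixed multiple of $N/D$; since $\alpha-\beta<0$ the second-derivative test confirms a maximum, and $p^\ast<2$ follows from $N<D$. Substituting $p^\ast$ back into $F$ collapses the expression into $N$ times $(N/D)^{1/2}$, scaled by the prefactor $13(c)_2/(64\,a(a)_3\,b(b)_3)$.

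The main obstacle will be the heavy Pochhammer bookkeeping: one must recognise that the grouped coefficients of $F$ reduce precisely to the combinations $N$ and $D$, and carry the simplification of $F(p^\ast)$ through to the asserted closed form. Verifying that $p^\ast$ lies in $(0,2)$ under $a,b\ge c>0$, checking $F(p^\ast)$ against the boundary values $F(0)=0$ and $F(2)$, and confirming the second-order condition are comparatively routine once the algebra is organised.
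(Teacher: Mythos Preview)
Your proposal is correct and follows essentially the same approach as the paper: the paper obtains this corollary by specialising Theorem~\ref{a1} to $b=1$, and Theorem~\ref{a1} itself is stated to be a ``straightforward verification on applying the same procedure as described in Theorem~\ref{k8}'', which is precisely the Libera--Zlotkiewicz substitution, triangle-inequality majorisation, and two-variable optimisation you outline---including the feature you correctly flag, that here the maximum in $p$ is attained at an interior critical point rather than at an endpoint. The only step you have not written explicitly is the final substitution $b=1$ (so that $b(b)_3=6$ and $(b+2)=3$), which is immediate.
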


Next we find the sharp upper bound for third hankel determinant of functions belonging to the class $\mathcal{R}_{a,b}^{c}.$
\begin{theorem}\label{a4}
If the function $f$ given by $(\ref{p1})$ belongs to the class $\mathcal{R}_{a,b}^{c},$ then
\begin{multline*}
        |H_{3}(1)|\leq \left(\frac{(c)_{2}}{(a)_{2}(b)_{2}}\right)^{3}+\frac{39(c)_{2}(c)_{3}}{64((a)_{3})^{2}((b)_{3})^{2}} \left(\frac{c(a+2)(b+2)+9ab(c+2)}{c(a+2)(b+2)+11ab(c+2)}\right)^{\frac{1}{2}}\\ \times(c(a+2)(b+2)+9ab(c+2))+\frac{15(c)_{4}(c)_{2}}{64(a)_{2}(a)_{4}(b)_{2}(b)_{4}}.
    \end{multline*}
\begin{proof}

Suitable applications of 
 Theorems \ref{k8}, \ref{k10},  \ref{a1}, \ref{a2},  \ref{a3} and Corollary \ref{k6} in equation (\ref{anuja}), the result follows.
    This completes the proof of Theorem \ref{a4}.
\end{proof}
\end{theorem}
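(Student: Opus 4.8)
The plan is to treat inequality (\ref{anuja}) as the master estimate and simply feed into it the six coefficient bounds that have already been established in the preceding results, since every quantity on its right-hand side has been controlled individually. Recall that (\ref{anuja}) reads
\[
|H_3(1)| \le |a_3|\,|a_2a_4-a_3^2| + |a_4|\,|a_2a_3-a_4| + |a_5|\,|a_3-a_2^2|,
\]
so it suffices to bound each of the three products and then add. I would carry out the three products in the order in which they appear.

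First I would handle the leading product: Corollary \ref{k6} gives $|a_3|\le (c)_2/((a)_2(b)_2)$ and Theorem \ref{k8} gives $|a_2a_4-a_3^2|\le\{(c)_2/((a)_2(b)_2)\}^2$, whose product is exactly the cube term $((c)_2/((a)_2(b)_2))^3$. For the middle product, Theorem \ref{k10} supplies $|a_4|\le 3(c)_3/((a)_3(b)_3)$ while Theorem \ref{a1} supplies the estimate for $|a_2a_3-a_4|$; multiplying these two produces the second summand (the numerical constants combine as $3\times 13=39$, and the radical factor $\left(\frac{c(a+2)(b+2)+9ab(c+2)}{c(a+2)(b+2)+11ab(c+2)}\right)^{1/2}$ is inherited verbatim from Theorem \ref{a1}). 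Finally, the third product combines Theorem \ref{a3}, $|a_5|\le\tfrac{15}{16}(c)_4/((a)_4(b)_4)$, with Corollary \ref{a2}, $|a_3-a_2^2|\le (c)_2/((a)_2(b)_2)$, giving the last summand. Adding the three contributions and recombining the $(a)_k$, $(b)_k$, $(c)_k$ products yields the asserted inequality.

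The only genuine care required is bookkeeping: one must keep the validity hypotheses of the ingredient results in force (in particular the restriction $a\ge c\ge 1/2$ used in Theorem \ref{k8}, alongside the standing assumption $a,b\ge c>0$) and make sure the various Pochhammer products recombine correctly into the stated coefficients. There are no analytic difficulties in this step; everything reduces to algebraic simplification of the Pochhammer symbols.

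The point worth flagging is conceptual rather than computational: the resulting bound is \emph{not} claimed to be sharp, and cannot be expected to be. Each ingredient estimate is extremal for its own extremal function (for instance $z\,{}_1F_2(c,b,a;z)*4z\sqrt{1+z^2}$ for $|a_3|$ and $|a_2a_4-a_3^2|$, but a different extremizer for $|a_2a_3-a_4|$), and these extremizers do not coincide, so the triangle inequality (\ref{anuja}) is strict in general. A sharp bound for $|H_3(1)|$ would instead require returning to the joint representation of $p_2,p_3$ in terms of $p_1$, $x$, $z$ furnished by (\ref{p13}) and (\ref{p14}) and maximizing the full functional $a_3(a_2a_4-a_3^2)-a_4(a_4-a_2a_3)+a_5(a_3-a_2^2)$ simultaneously over $(p,y)\in[0,2]\times[0,1]$, a substantially harder optimization that the present term-by-term argument deliberately avoids.
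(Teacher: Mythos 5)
Your proposal coincides with the paper's proof, which likewise just substitutes the bounds of Theorems \ref{k8}, \ref{k10}, \ref{a1}, \ref{a3} and Corollaries \ref{a2}, \ref{k6} into the triangle inequality (\ref{anuja}) and multiplies term by term; your closing remark that the bound cannot be sharp because the three ingredient estimates have different extremal functions is a correct observation that the paper itself does not make. One bookkeeping caveat: literal multiplication of the stated ingredients yields $\frac{39(c)_{2}(c)_{3}}{64\,a\,b\,((a)_{3})^{2}((b)_{3})^{2}}$ for the middle term and $\frac{15(c)_{2}(c)_{4}}{16\,(a)_{2}(a)_{4}(b)_{2}(b)_{4}}$ for the last, not the constants displayed in Theorem \ref{a4} (which drop the factor $ab$ and show $15/64$ in place of $15/16$), so this discrepancy lies in the paper's statement rather than in your argument.
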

Putting $b=1$ in the Theorem \ref{a4}, we obtained the following third Hankel determinant for the function class $\mathcal{R}(a,c).$
\begin{corollary}
If the function $f$ given by (\ref{p1}) belongs to the class $\mathcal{R}(a,c)$ then
\begin{multline*}
        |H_{3}(1)| \leq \left(\frac{(c)_{2}}{2(a)_{2}}\right)^{3}+\frac{39(c)_{2}(c)_{3}}{2304((a)_{3})^{2}} \left(\frac{3c(a+2)+9a(c+2)}{3c(a+2)+11a(c+2)}\right)^{\frac{1}{2}} \\ \times(3c(a+2)+9a(c+2))+\frac{15(c)_{4}(c)_{2}}{3072(a)_{2}(a)_{4}}.
\end{multline*}

\end{corollary}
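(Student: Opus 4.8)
The plan is to obtain this estimate as a direct specialization of Theorem~\ref{a4} to the parameter choice $b=1$. By Remark~\ref{pg1}, setting $b=1$ identifies $\mathcal{R}_{a,1}^{c}$ with $\mathcal{R}(a,c)$, so any $f\in\mathcal{R}(a,c)$ satisfies the hypotheses of Theorem~\ref{a4} with $b=1$. Consequently no new analytic input is needed: the whole argument consists of substituting $b=1$ into the three-term upper bound furnished by Theorem~\ref{a4} and simplifying the resulting Pochhammer symbols and numerical constants.

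First I would record the three Pochhammer evaluations that do all the work, namely $(1)_{2}=2$, $(1)_{3}=6$, and $(1)_{4}=24$. Then I would process the three summands of the bound in Theorem~\ref{a4} in order. For the leading term, $\left(\frac{(c)_{2}}{(a)_{2}(b)_{2}}\right)^{3}$ collapses to $\left(\frac{(c)_{2}}{2(a)_{2}}\right)^{3}$ once $(b)_{2}=2$ is inserted. For the middle term, the factor $((b)_{3})^{2}=36$ in the denominator turns $64\,((a)_{3})^{2}((b)_{3})^{2}$ into $2304\,((a)_{3})^{2}$, while substituting $b=1$ into $c(a+2)(b+2)+9ab(c+2)$ and $c(a+2)(b+2)+11ab(c+2)$ produces $3c(a+2)+9a(c+2)$ and $3c(a+2)+11a(c+2)$ respectively; these fix both the argument of the square root and the linear multiplier. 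For the trailing term, $(b)_{2}(b)_{4}=2\cdot 24=48$ converts $64\,(a)_{2}(a)_{4}(b)_{2}(b)_{4}$ into $3072\,(a)_{2}(a)_{4}$, yielding $\frac{15(c)_{4}(c)_{2}}{3072(a)_{2}(a)_{4}}$.

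The only delicate point is the bookkeeping of the two numerical constants, $2304=64\cdot((1)_{3})^{2}$ and $3072=64\cdot(1)_{2}(1)_{4}$, together with the coincidence that the $b=1$ specializations of the two trinomials differ only in their final coefficient ($9$ versus $11$); these are routine but easy to mis-transcribe, so I would verify them explicitly. Reassembling the three simplified summands reproduces the displayed multi-line inequality verbatim, which completes the proof of the corollary.
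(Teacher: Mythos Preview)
Your proposal is correct and matches the paper's approach exactly: the paper simply states that the corollary follows by putting $b=1$ in Theorem~\ref{a4}, and your substitution and simplification of the Pochhammer symbols $(1)_{2}=2$, $(1)_{3}=6$, $(1)_{4}=24$ reproduces the displayed bound verbatim.
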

Finally, we have following sufficient condition for a function in $\mathcal{A}$ to be in the class $\mathcal{R}_{a,b}^{c}:$
\begin{theorem}\label{k11}
    Let $\gamma >0.$ If $f \in \mathcal{A}$ satisfies
    \begin{equation*}\label{p49}
        \Re \left\{\frac{\mathcal{I}_{a+1,b}^{c}f(z)}{\mathcal{I}_{a,b}^{c}f(z)•}\right\}<1+\frac{1}{•2a \gamma}~~~~~~~~\qquad(z \in \mathcal{U}),
    \end{equation*}

    then
    \begin{eqnarray*}\label{p50}
        \frac{\mathcal{I}_{a,b}^{c}f(z)}{•z}\prec (1+z)^{1/\gamma}~~~~~~~~\qquad(z \in \mathcal{U})
    \end{eqnarray*}
    and the result is the best possible.
    \end{theorem}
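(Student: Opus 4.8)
The plan is to recast the hypothesis as a first-order differential subordination for the quotient $p(z):=\mathcal{I}_{a,b}^{c}f(z)/z$ and then invoke the admissibility machinery of Miller and Mocanu \cite{milmo1}. First I would set $p(z)=\mathcal{I}_{a,b}^{c}f(z)/z$, which is analytic in $\mathcal{U}$ with $p(0)=1$; the very finiteness of the quotient appearing in the hypothesis forces $\mathcal{I}_{a,b}^{c}f(z)\neq 0$ on $\mathcal{U}\setminus\{0\}$, so $p$ is zero-free. Writing $\mathcal{I}_{a,b}^{c}f=zp$ and substituting into the identity $(\ref{p5})$, a short computation gives $\mathcal{I}_{a+1,b}^{c}f(z)/\mathcal{I}_{a,b}^{c}f(z)=1+\frac{1}{a}\,\frac{zp'(z)}{p(z)}$. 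Since $a>0$, the hypothesis is therefore exactly equivalent to $\Re\{zp'(z)/p(z)\}<1/(2\gamma)$ on $\mathcal{U}$.

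Next I would identify the dominant. For $q(z)=(1+z)^{1/\gamma}$ one computes $zq'(z)/q(z)=z/(\gamma(1+z))$, and since $z/(1+z)$ maps $\mathcal{U}$ conformally onto the half-plane $\{\Re w<1/2\}$, the function $z/(\gamma(1+z))$ maps $\mathcal{U}$ conformally onto $\{\Re w<1/(2\gamma)\}$. Because this image is precisely the region in which $zp'/p$ is constrained to lie, because both functions vanish at the origin, and because the dominant is univalent, the reverse implication for subordination onto a univalent image (noted in the Introduction) yields $zp'(z)/p(z)\prec zq'(z)/q(z)$.

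I would then apply the differential subordination theorem of \cite{milmo1} with $\theta\equiv 0$ and $\varphi(w)=1/w$, so that $Q(z)=zq'(z)\varphi(q(z))=z/(\gamma(1+z))$ and $h(z)=\theta(q(z))+Q(z)=Q(z)$. The admissibility hypotheses reduce to two real-part computations: $Q$ is starlike because $\Re\{zQ'(z)/Q(z)\}=\Re\{1/(1+z)\}>1/2>0$, and likewise $\Re\{zh'(z)/Q(z)\}=\Re\{1/(1+z)\}>0$. Since $p(0)=q(0)=1$, since $p$ omits the value $0$ (the domain on which $\varphi$ is analytic and non-vanishing), and since $\theta(p)+zp'\varphi(p)=zp'/p\prec h$, the theorem delivers $p\prec q$, that is $\mathcal{I}_{a,b}^{c}f(z)/z\prec(1+z)^{1/\gamma}$, and it further certifies that $q$ is the best dominant. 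Sharpness then follows by taking $f$ with $\mathcal{I}_{a,b}^{c}f(z)=z(1+z)^{1/\gamma}$, for which every inequality above collapses to an equality.

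The main obstacle I anticipate is conceptual rather than computational: the dominant $q(z)=(1+z)^{1/\gamma}$ is itself \emph{not} starlike (indeed $zq'/q$ assumes negative real parts), so the naive criterion ``$zp'/p\prec zq'/q$ with $q$ starlike implies $p\prec q$'' is unavailable here. The resolution is to notice that it is $Q=zq'/q$, not $q$, that must be starlike, which is exactly what the choice $\varphi(w)=1/w$ in the Miller--Mocanu scheme arranges; verifying the two admissibility inequalities correctly, together with confirming that $p$ is zero-free so that $\varphi(p)$ is meaningful, is where the care must be concentrated.
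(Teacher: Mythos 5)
Your reduction is correct and genuinely different from the paper's argument: the identity $\mathcal{I}_{a+1,b}^{c}f/\mathcal{I}_{a,b}^{c}f=1+\frac{1}{a}\frac{zp'(z)}{p(z)}$ follows from (\ref{p5}) exactly as you say, the hypothesis is equivalent (since $a>0$) to $\Re\{zp'/p\}<1/(2\gamma)$, and your verifications that $Q(z)=zq'(z)/q(z)=z/(\gamma(1+z))$ is starlike with $\Re\{zQ'/Q\}=\Re\{1/(1+z)\}>1/2$ are all sound. The paper instead proceeds in the classical way: it writes $\mathcal{I}_{a,b}^{c}f(z)/z=(1+w(z))^{1/\gamma}$, assumes $|w(z_0)|=1$ at some interior maximum point, applies Jack's lemma to get $z_0w'(z_0)=kw(z_0)$ with $k\geq 1$, and derives the contradiction $\Re\{\cdot\}\geq 1+\frac{k}{2a\gamma}$ using $\Re\{e^{i\theta}/(1+e^{i\theta})\}=1/2$. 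Your Miller--Mocanu route, where it applies, is cleaner and yields the stronger ``best dominant'' conclusion rather than only the sharpness of the constant, which the paper establishes by letting $z\to 1^{-}$ along the extremal function (\ref{p56}).

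However, there is a genuine gap: the theorem of \cite{milmo1} you invoke (the $\theta\equiv 0$, $\varphi(w)=1/w$ case) requires the dominant $q$ to be \emph{univalent} in $\mathcal{U}$, and $q(z)=(1+z)^{1/\gamma}$ is univalent only for $\gamma\geq 1/2$. For $0<\gamma<1/2$ it is not: writing $1+z=re^{i\theta}$, membership in the disk $|w-1|<1$ means $r<2\cos\theta$, and for $\alpha=1/\gamma>2$ one can choose equal moduli and angles differing by $2\pi/\alpha<\pi$; concretely, for $\gamma=1/4$ the points $z_{\pm}=e^{\pm i\pi/4}-1\in\mathcal{U}$ satisfy $q(z_{+})=(e^{i\pi/4})^{4}=-1=q(z_{-})$. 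Since the statement asserts the result for all $\gamma>0$, your proof as written fails on $0<\gamma<1/2$ — and note this is a failure of a hypothesis of the cited theorem, not a removable technicality, since the admissibility machinery (and the ``best dominant'' assertion) is built on univalent $q$. The repair is short: since $p$ is zero-free, set $P=\log p$ and $G(z)=\frac{1}{\gamma}\log(1+z)$, which \emph{is} convex univalent; your half-plane argument says precisely $zP'(z)\prec zG'(z)=z/(\gamma(1+z))$, so Suffridge's theorem (or the same Miller--Mocanu result applied to $P$ and $G$) gives $P\prec G$, i.e.\ $\log p(z)=\frac{1}{\gamma}\log(1+\omega(z))$ for a Schwarz function $\omega$, and exponentiating yields $p=q\circ\omega$, i.e.\ $p\prec q$, for every $\gamma>0$. (The paper's Jack's-lemma argument avoids the issue altogether because it constructs the Schwarz function directly; also, your ``finiteness forces $p\neq 0$'' remark deserves a line of justification — a zero of $p$ of order $m$ makes $zp'/p$ behave like $mz_{1}/(z-z_{1})$, whose real part is unbounded above near $z_{1}$, violating the hypothesis — though the paper is no more careful on this point when it selects its branch in (\ref{p51}).)
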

    \begin{proof}
        Setting
        \begin{eqnarray}\label{p51}
            \frac{\mathcal{I}_{a,b}^{c}f(z)}{•z}= (1+w(z))^{1/\gamma}~~~~~~~~\qquad(z \in \mathcal{U}).
        \end{eqnarray}
        Choosing the principal branch in $(\ref{p51}),$ we see that $w$ is analytic in $\mathcal{U}$ with $w(0)=0.$ Taking the logarithmic differentiation in $(\ref{p51})$ and using the identity $(\ref{p5})$ in the resulting equation, we deduce that
        \begin{eqnarray}\label{p52}
            \frac{(\mathcal{I}_{a+1,b}^{c}f(z))}{•(\mathcal{I}_{a,b}^{c}f(z))}=1+\frac{zw^{\prime}(z)}{•a \gamma (1+w(z))}~~~~~\qquad(z \in \mathcal{U}).
        \end{eqnarray}
        Next to claim that $ \mid w(z) \mid <1,$ $z \in \mathcal{U}.$ $\exists$ a $z_{0} \in \mathcal{U}$ such that
        \begin{eqnarray*}\label{p53}
            \max_{\mid z \mid \leq \mid z_{0} \mid} \mid w(z) \mid= \mid w(z_{0}) \mid = 1~~~~~(w(z_{0})\neq 1).
        \end{eqnarray*}
        Letting $w(z_{0})=e^{i\theta}~~(-\pi <\theta \leq \pi )$ and applying Jack's lemma \cite{8}, we have
        \begin{eqnarray}\label{p54}
            z_{0}w^{\prime}(z_{0})=k w(z_{0})~~~~~~~\qquad (k\geq 1).
        \end{eqnarray}
        Using $(\ref{p54})$ in $(\ref{p52})$ then, we get
        \begin{eqnarray}\label{p55}
            \Re \left( \frac{\mathcal{I}_{a+1,b}^{c}f(z)}{•\mathcal{I}_{a,b}^{c}f(z)} \right)&=&1+\frac{1}{•a \gamma} \Re \left\{\frac{z_{0}w^{\prime}(z_{0})}{•1+w(z_{0})}\right\}\nonumber \\&=& 1+\frac{k}{•a\gamma}\Re \left(\frac{•e^{i\theta}}{•1+e^{i\theta}}\right)\nonumber \\ &\geq & 1+\frac{k}{•2a \gamma}.
        \end{eqnarray}
        Thus we conclude that $\mid w(z) \mid <1$ for $z \in \mathcal{U}$ and  the theorem follows from $(\ref{p51}).$ For sharpness we consider  the principal branch of the function $f_{0}$ defined as,
        \begin{eqnarray}\label{p56}
            f_{0}(z)=z~~ _{1}F_{2}(c,b,a;z)\ast z (1+z)^{\frac{1}{•\gamma}}~~~~\qquad(z \in \mathcal{U}).
        \end{eqnarray}
       Therefore   $(\ref{p56}),$  yields
        \begin{eqnarray*}\label{p57}
            \frac{\mathcal{I}_{a,b}^{c}f_{0}(z)}{•z}= (1+z)^{1/\gamma}.
        \end{eqnarray*}
       Taking logarithmic differentiation and suitable application of \ref{p5}, gives
        \begin{eqnarray*}\label{p58}
            \Re \left( \frac{\mathcal{I}_{a+1,b}^{c}f_{0}(z)}{•\mathcal{I}_{a,b}^{c}f_{0}(z)} \right)&=& 1+\frac{1}{•a\gamma}\frac{z}{•1+z}\nonumber \\ &\longrightarrow & 1+\frac{1}{•2a \gamma}~~as ~~ z\longrightarrow 1^{-}.
        \end{eqnarray*}
        This completes the proof of Theorem \ref{k11}.
    \end{proof}
    \begin{remark}
Putting $b=1$ in the theorem $\ref{k11},$ we get the result due to Patel and Sahoo (cf. \cite{22a}, Theorem 11).

    \end{remark}
    Putting $\gamma=2$ in Theorem \ref{k11}, we have the following
    \begin{corollary}
        If $f \in \mathcal{A}$ satisfies
        \begin{equation*}\label{p59}
            \Re \left\{\frac{\mathcal{I}_{a+1,b}^{c}f(z)}{\mathcal{I}_{a,b}^{c}f(z)}\right\}<1+\frac{1}{4a}\qquad(z \in \mathcal{U}),
        \end{equation*}
        then $f \in \mathcal{R}_{a,b}^{c}$. The result is the best possible.
    \end{corollary}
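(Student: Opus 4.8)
The plan is to obtain the corollary as a direct specialization of Theorem~\ref{k11} at the parameter value $\gamma=2$, since the defining subordination of $\mathcal{R}_{a,b}^{c}$ is exactly the $\gamma=2$ instance of the conclusion of that theorem. First I would record that $f\in\mathcal{R}_{a,b}^{c}$ means precisely $\frac{(\mathcal{I}_{a,b}^{c}f)(z)}{z}\prec\sqrt{1+z}$, and that $\sqrt{1+z}=(1+z)^{1/2}=(1+z)^{1/\gamma}$ when $\gamma=2$. Thus membership in $\mathcal{R}_{a,b}^{c}$ is literally the conclusion $\frac{\mathcal{I}_{a,b}^{c}f(z)}{z}\prec(1+z)^{1/\gamma}$ of Theorem~\ref{k11} read at $\gamma=2$.

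Next I would substitute $\gamma=2$ into the hypothesis of Theorem~\ref{k11}. The bound $1+\frac{1}{2a\gamma}$ collapses to $1+\frac{1}{4a}$, which is exactly the hypothesis stated in the corollary. Hence any $f\in\mathcal{A}$ satisfying $\Re\left\{\mathcal{I}_{a+1,b}^{c}f(z)/\mathcal{I}_{a,b}^{c}f(z)\right\}<1+\frac{1}{4a}$ fulfils the assumption of Theorem~\ref{k11} with $\gamma=2$, and therefore satisfies $\frac{\mathcal{I}_{a,b}^{c}f(z)}{z}\prec(1+z)^{1/2}=\sqrt{1+z}$; that is, $f\in\mathcal{R}_{a,b}^{c}$. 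No independent argument is needed, as the Jack's-lemma mechanism has already been carried out in the proof of Theorem~\ref{k11}.

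For the ``best possible'' assertion I would appeal to the extremal function $f_{0}$ constructed in $(\ref{p56})$. Setting $\gamma=2$ there yields $f_{0}(z)=z\,{}_{1}F_{2}(c,b,a;z)\ast z(1+z)^{1/2}$, for which $\frac{\mathcal{I}_{a,b}^{c}f_{0}(z)}{z}=(1+z)^{1/2}=\sqrt{1+z}$ traces the boundary lemniscate, and the limiting computation of Theorem~\ref{k11} specializes to $\Re\left(\mathcal{I}_{a+1,b}^{c}f_{0}(z)/\mathcal{I}_{a,b}^{c}f_{0}(z)\right)\to 1+\frac{1}{4a}$ as $z\to 1^{-}$, showing the constant $1+\frac{1}{4a}$ cannot be enlarged. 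There is essentially no obstacle in this argument: the only point demanding any care is the identification $(1+z)^{1/2}=\sqrt{1+z}$, which is what aligns the conclusion of Theorem~\ref{k11} verbatim with the defining subordination of $\mathcal{R}_{a,b}^{c}$; everything else is a mechanical insertion of $\gamma=2$ into the already-proved parent theorem together with its sharpness statement.
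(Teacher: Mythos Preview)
Your proposal is correct and follows exactly the paper's approach: the corollary is stated immediately after the line ``Putting $\gamma=2$ in Theorem \ref{k11}, we have the following,'' so the paper gives no separate proof beyond this specialization. Your identification of $(1+z)^{1/2}$ with the defining subordination of $\mathcal{R}_{a,b}^{c}$ and your appeal to the extremal function $f_{0}$ from $(\ref{p56})$ for sharpness are precisely what the paper intends.
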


\end{document}